\documentclass{amsart}
\pdfoutput=1 
\usepackage{amssymb, latexsym}
\usepackage{graphicx}
\usepackage{hyperref}


\title{Virtual Geometricity is Rare}
\author{Christopher H. Cashen} 
\address{
Fakult\"at f\"ur Mathematik\\
Universit\"at Wien\\
Oskar-Morgenstern-Platz 1\\
1090 Vienna \\Austria}
\email{\href{mailto:christopher.cashen@univie.ac.at}{christopher.cashen@univie.ac.at}}

\author{Jason F. Manning}
\address{
Department of Mathematics\\
310 Malott Hall\\
Cornell University\\
Ithaca, NY  14853\\
USA
}
\email{\href{mailto:jfmanning@math.cornell.edu}{jfmanning@math.cornell.edu}}
\keywords{geometric word, virtually geometric, random word, free
  group}
\subjclass[2010]{20E05, 20P05, 57M10}
\date{March 16, 2015}

\hypersetup{
    pdftitle={Virtual Geometricity is Rare},    
    pdfauthor={Christopher H. Cashen and Jason F. Manning},     
    pdfkeywords={geometric word, virtually geometric, random word, free group}, 
    colorlinks=true,       
    linkcolor=black,          
    citecolor=black,        
    filecolor=black,      
    urlcolor=black           
}


\theoremstyle{plain}
\newtheorem*{reftheorem}{Theorem}
\newtheorem{theorem}{Theorem}[section]
\newtheorem{lemma}{Lemma}[section]
\newtheorem{proposition}{Proposition}[section]
\newtheorem{corollary}{Corollary}[section]

\newtheorem*{question}{Question}

\theoremstyle{remark}
\newtheorem*{claim}{Claim}

\newtheorem*{remark}{Remark}

\theoremstyle{definition}
\newtheorem{definition}{Definition}[section]

\newenvironment{claimproof}[1][Proof of Claim]{\vspace{1ex}\noindent{\it #1.}\hspace{0.5em}}
	{\hfill$\lozenge$\vspace{1ex}}

\def\makeautorefname#1#2{\expandafter\def\csname#1autorefname\endcsname{#2}}
\let\fullref\autoref

\makeautorefname{theorem}{Theorem} 
\makeautorefname{lemma}{Lemma} 
\makeautorefname{proposition}{Proposition} 
\makeautorefname{corollary}{Corollary} 
\makeautorefname{definition}{Definition}
\makeautorefname{example}{Example}
\makeautorefname{section}{Section}
\makeautorefname{subsection}{Section}
\makeautorefname{subsubsection}{Section}
\makeautorefname{claim}{Claim}

\makeatletter 
\let\c@lemma=\c@theorem 
\makeatother
\makeatletter 
\let\c@proposition=\c@theorem 
\makeatother
\makeatletter 
\let\c@corollary=\c@theorem 
\makeatother
\makeatletter 
\let\c@definition=\c@theorem 
\makeatother
\makeatletter 
\let\c@example=\c@theorem 
\makeatother

\makeatletter
\newsavebox\myboxA
\newsavebox\myboxB
\newlength\mylenA

\newcommand*\xoverline[2][0.75]{%
    \sbox{\myboxA}{$\m@th#2$}%
    \setbox\myboxB\null
    \ht\myboxB=\ht\myboxA%
    \dp\myboxB=\dp\myboxA%
    \wd\myboxB=#1\wd\myboxA
    \sbox\myboxB{$\m@th\overline{\copy\myboxB}$}
    \setlength\mylenA{\the\wd\myboxA}
    \addtolength\mylenA{-\the\wd\myboxB}%
    \ifdim\wd\myboxB<\wd\myboxA%
       \rlap{\hskip 0.5\mylenA\usebox\myboxB}{\usebox\myboxA}%
    \else
        \hskip -0.5\mylenA\rlap{\usebox\myboxA}{\hskip 0.5\mylenA\usebox\myboxB}%
    \fi}
\makeatother
\newcommand{\inv}[1]{\xoverline{#1}}

\DeclareMathOperator{\Aut}{Aut} 
\newcommand{\bdry}{\partial} 
\newcommand{\R}{\mathbb{R}}
\newcommand{\F}{\mathbb{F}}
\newcommand{\C}{\mathbb{C}}
\newcommand{\bZ}{\mathbb{Z}}
\newcommand{\closure}[1]{\overline{#1}} 
\newcommand{\from}{\colon\thinspace}
\newcommand{\co}{\from}


\newcommand{\mot}{w} 
\newcommand{\multimot}{\underline{w}} 
\newcommand{\multicon}{[\underline{w}]}
\newcommand{\mott}{v} 
\newcommand{\basis}{\underline{x}}
\newcommand{\basic}{x}

\DeclareMathOperator{\Wh}{\mathfrak{W}} 

\newcommand{\X}{\mathcal{X}} 

\newcommand{\tree}{\mathcal{T}} 
\newcommand{\lines}{\textbf{L}} 
\renewcommand{\line}{\mathcal{L}} 

\newcommand{\linesw}{\lines_{\multimot}}

\newcommand{\therank}{r}



\begin{document}
\begin{abstract}
We present the results of computer experiments suggesting that 
the probability that a random multiword in a free group
is virtually geometric decays to zero exponentially quickly in the
length of the multiword.
We also prove this fact.

\end{abstract}

\maketitle


\section{Introduction}
Let $\F$ be a finite rank non-abelian free group. 
Fix, once and for all, a
basis $\basis=\{\basic_1,\dots,\basic_\therank\}$.
A \emph{word in $\F$} is an element of $\F$ expressed as a freely
reduced word in the letters $\basis^\pm$.
A \emph{multiword} $\multimot=\{w_1,\ldots,w_k\}$ is a finite subset of $\F$.
The set $\multimot$ determines a collection of conjugacy classes $\multicon=\{[w_1],\ldots,[w_k]\}$ in $\F$, possibly with multiplicity.

\begin{definition}\label{def:geometric}
  Fix an identification of $\F$ with the fundamental group of an orientable  $3$--dimensional handlebody $H$. 
  The set of conjugacy classes $\multicon$ determines a free homotopy class of map 
  $\sqcup_{w\in \multimot}S^1\to H$.  
The multiword $\multimot$ is
   (orientably) \emph{geometric} if the  
   homotopy class determined by $\multicon$ contains an embedding into $\bdry H$.

Similarly, a multiword $\multimot$ is \emph{non-orientably geometric} there is such an embedding where we allow
$H$ to be a non-orientable handlebody.
\end{definition}

\begin{remark}
Geometricity does not depend on the choice of identification
$\F=\pi_1(H)$, since the handlebody group, that is, the group of
(orientation preserving) homeomorphisms of $H$ modulo isotopy, surjects onto the outer
automorphism group of $\pi_1(H)$.
\end{remark}

\begin{definition}
  If $[w]$ is a conjugacy class in $\F$ and $F<\F$ is a finite index
  subgroup, we can ``lift'' $[w]$ to $F$ as follows.  
Let $[w]_F$ be the set of $F$--conjugacy classes of the form $g^{-1}w^\alpha g$, where $g\in \F$ and  $\alpha=\alpha(w,g)\geq 1$ is minimal subject to the requirement that $g^{-1}w^\alpha g \in F$.  The lift $\multicon_F$ of $\multicon$ to $F$ is then defined to be $\sqcup_{w\in \multimot} [w]_F$.

(From a topological point of view, let $\tilde{H}\to H$ be the cover corresponding to $F<\F$.
A conjugacy class $[w]$ corresponds to the free homotopy class of some map $\phi\co S^1\to H$, and $[w]_F$ 
corresponds to the collection of free homotopy classes of elevations of $\phi$ to $\tilde{H}$.)
\end{definition}

\begin{definition}
  A multiword $\multimot$ is \emph{virtually geometric} if
  there exists a finite index subgroup $F$ of $\F$ such that $\multicon_F$ is geometric.
\end{definition}

The Baumslag-Solitar words $\inv{b}a^pb\inv{a}^q$ in $\F_2=\langle
a,b\rangle$ with $p\neq 0\neq q$ and $|p|\neq |q|$ are examples of
words that are virtually geometric but not geometric \cite[Section 6]{GorWil10}.

If $\multimot$ is orientably geometric then $\langle
\basis\mid\multimot\rangle$ is the fundamental group of a compact
3--manifold. This can be seen by taking the handlebody $H$ with
$\pi_1(H)=\F$ and attaching a 2--handle along each component of a regular neighborhood of the
embedded multicurve representing $\multicon$.

If $\multimot$ is virtually geometric then there exists a positive
number $n$ such that $\langle \basis\mid \mot^n \text{ for
}\mot\in\multimot\rangle$ is virtually a 3--manifold group (see \cite[Remark~1.5]{Man10}).

Gordon and Wilton \cite{GorWil10} studied virtual geometricity as an
approach to a special case of Gromov's surface subgroup conjecture.
They show that if $\multimot$ is virtually geometric, then the
\emph{double of $\F$ along $\multimot$}, constructed by taking two
copies of $\F$ and identifying the two copies of each
$\mot\in\multimot$ via cyclic amalgamation, is virtually a 3--manifold
group, and contains a surface subgroup when it is 1--ended.
In an early version of  \cite{GorWil10}
(\cite[Question~22]{GorWil09}), they ask whether any non-virtually geometric word exists.  One reason to suspect words to
be always virtually geometric is the effectiveness in low-dimensional
topology of arguments ``desingularizing'' immersed submanifolds in
finite-sheeted covers.  Two important examples of such arguments are
Papakyriakopoulos' ``Tower argument'' for the Loop and Sphere Theorems
\cite{Pap57} and Scott's argument for subgroup separability in surface
groups \cite{Sco78}.  However, there do exist non-virtually geometric
words;  the second author gave a (non-generic) criterion in \cite{Man10} and
exhibited some words which satisfy it.

One might still wonder how common virtual geometricity is.
We wrote a computer program that determines if a given multiword is
virtually geometric or not, and set it to work testing random
multiwords in low rank free groups.
Our experiments, presented in \fullref{sec:experiment}, suggest that the probability that a random multiword
is virtually geometric decays to zero exponentially quickly in the
length of the multiword.
We also make explicit estimates for the rate of exponential decay.
Surprisingly, our experiments suggest that the ratio of the number of virtually
geometric words to the number of geometric words of a given length is
bounded above.

\begin{question}
  Does the ratio of virtually geometric words to geometric words stay bounded as the length goes to infinity?  Does it tend to $1$?
\end{question}

In Sections \ref{sec:whitehead}--\ref{sec:vgrare} of the paper we apply the technology developed in \cite{CasMac11,Cas10splitting} to establish the result suggested by
the experiments.  In the terminology of Section \ref{sec:generic}, we show:
\begin{reftheorem}
Virtual geometricity is exponentially rare.  
\end{reftheorem}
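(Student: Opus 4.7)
The plan is to translate virtual geometricity into a combinatorial condition on the Whitehead graph $\Wh(\multimot)$ and the associated decomposition space of \cite{CasMac11,Cas10splitting} using the machinery developed in Sections~\ref{sec:whitehead}--\ref{sec:vgrare}, and then show that a uniformly random multiword fails this condition except with exponentially small probability. The central observation is that virtual geometricity places strong structural restrictions on $\Wh(\multimot)$: either the graph is planar, in which case $\multimot$ is already geometric, or $\multimot$ must admit a non-trivial splitting of a very particular ``surface type'', detectable from the decomposition space.

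The first step is to identify a finite list $\mathcal{O}$ of \emph{obstruction patterns}\,---\,bounded-length subwords whose appearance in some $\mot\in\multimot$ forces a local subgraph of $\Wh(\multimot)$ to be simultaneously non-planar and rigid, in the sense that it admits no non-trivial splitting compatible with virtual geometricity. Plausible candidates include short subwords producing $K_5$ or $K_{3,3}$ topological minors (ruling out planarity of the Whitehead graph) together with rigidity witnesses analogous to the Manning-type configurations of \cite{Man10}. Here one uses that the Whitehead graph of a subword embeds into $\Wh(\multimot)$, so that the presence of an obstruction pattern anywhere in $\multimot$ already obstructs any global virtually geometric structure.

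The second step is to bound the probability that a random freely reduced multiword of total length $n$ avoids every pattern in $\mathcal{O}$. Since a freely reduced word in $\F$ is the trajectory of an irreducible finite-state Markov chain on $\basis^\pm$, the probability of avoiding any fixed bounded-length pattern is dominated by the spectral radius of a ``deletion'' transfer matrix, which is strictly less than $1$. A union bound over the finitely many patterns in $\mathcal{O}$ then yields a single exponential decay rate $\lambda<1$.

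The main obstacle will be carrying out the first step: showing that virtual geometricity is actually ruled out by the presence of a genuinely \emph{local} pattern, given that passing to a finite-index subgroup $F<\F$ re-expresses each $\mot\in\multimot$ in a new basis and alters the corresponding Whitehead graph. The key will be to verify that the obstructions persist under such lifts, which should follow from the compatibility of the decomposition-space framework of \cite{CasMac11,Cas10splitting} with finite covers: a local rigidity witness for the decomposition space downstairs lifts to one upstairs, so no change of basis of $F$ can smooth out the obstruction. Once this is in place, Steps~2 and~3 become standard Markov-chain and counting estimates.
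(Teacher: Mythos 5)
There is a genuine gap, and it sits exactly where you locate your ``main obstacle'': the hope that a local obstruction to geometricity ``persists under lifts'' to finite-index subgroups. No such persistence argument can work as stated, because it would prove too much: the Baumslag--Solitar words $\inv{b}a^pb\inv{a}^q$ cited in the introduction are non-geometric but \emph{virtually} geometric, so an obstruction to geometricity that always lifted to covers cannot exist. The paper's proof of \fullref{main} never analyzes covers at all. Instead it inverts your logic: rigidity is not a second obstruction to be combined with a non-planarity witness, it is the mechanism that \emph{collapses virtual geometricity to geometricity downstairs}. Concretely, a random word is exponentially generically full, fullness kills all free and cyclic splittings relative to the word (\fullref{cor:fullisrigid}, via connectivity of generalized Whitehead graphs over lines, \fullref{prop:line}), hence the relative JSJ is trivial, and the characterization of \cite{Cas10splitting} then says the word is virtually geometric if and only if it is orientably or non-orientably geometric. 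Only after this reduction does one need an obstruction to geometricity, and it is applied in $\F$ itself, with no lifting.

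Your proposed obstruction is also the wrong one, for two concrete reasons. First, in rank $2$ the classical Whitehead graph has only four vertices, so it is planar as a multigraph for \emph{every} word; no bounded pattern can force a $K_5$ or $K_{3,3}$ minor there, yet the theorem holds for all ranks $r>1$. Second, geometricity is a property of the $\Aut(\F)$--orbit, and planarity-type criteria (Berge's) apply to \emph{Whitehead minimal} representatives; a subword pattern in $w$ need not survive Whitehead reduction, so non-planarity of $\Wh(*)$ for the given representative obstructs nothing. The paper handles this by restricting to the exponentially generic set of words with Whitehead minimal cyclic reduction (\fullref{corollary:genericmultiword}, Kapovich--Schupp--Shpilrain) and then using Berge's syllable lemma (\fullref{lem:fewpowers}): a minimal geometric or non-orientably geometric word whose Whitehead graph has no separating pair of vertices carries at most three distinct powers of $\basic_1$ as syllables, whereas the poison word $\basic_1^1\basic_2\basic_1^2\basic_2\basic_1^3\basic_2\basic_1^4\basic_2\mott'$ forces four --- note this simultaneously rules out the non-orientable case, which your planarity framing does not address. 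Your Step 2 (transfer-matrix bounds for pattern containment) is sound and is essentially the paper's \fullref{prop:growthsensitive} together with the middle-third argument of \fullref{prop:cyclicreductiongrowthsensitive}, but without the reduction via \cite{Cas10splitting} and the minimality-plus-syllable obstruction, the counting step has nothing valid to count.
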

More precisely, recall that $\F$ is a free group of fixed finite rank $r>1$.
In \fullref{main} we show that the 
proportion of words of length $l$ in $\F$ which are virtually geometric decays to zero exponentially quickly in $l$.
The same is true if we restrict our attention to the subset $\C\subseteq \F$ consisting of cyclically reduced words.

For multiwords there are different models of genericity. 
However, a multiword that contains a non-virtually geometric word is
itself non-virtually geometric, so virtual geometricity will also be
exponentially rare for any model in which a random multiword contains
a long random word. 
In particular, virtual geometricity is exponentially rare for
multiwords in both the ``few relators model'' of genericity
(\fullref{corollary:fewrelators}) and the ``density model'' (\fullref{corollary:density}). 

The rough idea of the proof is to find a ``poison'' word $v\in \F$
which obstructs the virtual geometricity of any cyclically reduced
$w\in \F$ containing $v$ as a subword.  

The first author \cite{Cas10splitting} characterized virtually
geometric multiwords as those that are constructed as amalgams of
geometric pieces. In particular, if $\F$ does not admit cyclic
splittings relative to $\multimot$, that is, splittings in which the elements of $\multimot$ are elliptic, then
virtual geometricity reduces to geometricity. 
The poison word $v$ is a concatenation of words $v_1$ and $v_2$ so
that $v_1$ obstructs the existence of a relative splitting, and $v_2$
obstructs non-orientable geometricity.  
The characterization from \cite{Cas10splitting} implies that $v=v_1v_2$ obstructs virtual geometricity.

Finally we appeal to the well-known fact (\fullref{prop:growthsensitive}) that cyclically reduced words exponentially generically contain every short word -- in particular they contain $v$.

In fact there is the slight complication that our word $v$ is only poisonous to Whitehead minimal words, but these are exponentially generic by a result of Kapovich--Schupp--Shpilrain \cite{MR2221020}.

\subsection{Acknowledgments}
Thanks to Nathan Dunfield for pointing out \fullref{lem:fewpowers},
to John Mackay for explaining the note in
\fullref{prop:growthsensitive}, and to the referee for helpful
comments and the reference
to Solie's paper \cite{Sol12}.

The first author was supported by the Agence Nationale de la Recherche
(ANR) grant ANR-2010-BLAN-116-01 GGAA, the European Research Council
(ERC) grant of Goulnara ARZHANTSEVA, grant agreement \#259527, and the ICERM Program ``Low-dimensional Topology, Geometry, and Dynamics".
The second author was partly supported by the National Science
Foundation, grants DMS 1104703 and 1343229.
This project also benefited greatly from a visit funded by the GEAR network, grants DMS 1107452, 1107263, 1107367.

\section{Generic Sets}\label{sec:generic}

Our definitions in this section follow Kapovich, Schupp, and Shpilrain \cite{MR2221020}.
A sequence $(c_n)\subset\R$ with $\lim_{n\to\infty}c_n=c\in\R$
\emph{converges exponentially fast} if there exist $a>0$ and
$b\in\R$ such that $|c-c_n|\leq \exp(b-an)$ for all sufficiently
large $n$.

Let $|\mot|$ denote the length of $\mot$ in the word metric on $\F$
corresponding to $\basis$.

\begin{definition}
Let $A\subset B\subset\F$.
  The set $A$ is \emph{generic} in $B$ if:
\[\lim_{n\to\infty}\frac{\#\{\mot\in A\mid |\mot|\leq
  n\}}{\#\{\mot\in B\mid |\mot|\leq n\}}=1\]

$A$ is \emph{exponentially generic} in $B$ if the convergence is
exponentially fast.

A subset is \emph{rare}, or \emph{negligible}, if the complement is
generic.
It is \emph{exponentially rare}, or \emph{exponentially negligible},
if the complement is exponentially generic. 
\end{definition}

It is an easy computation to see that the intersection of finitely
many (exponentially) generic sets is (exponentially) generic.

A property $\mathcal{P}$ is said to be (exponentially) generic/rare in
$B$ if the set of words having $\mathcal{P}$ is (exponentially) generic/rare.

Let $\C$ be the set of cyclically reduced words in $\F$ with respect to the basis $\basis$.

\begin{theorem}[{\cite[Theorem
  ~B(1)]{MR2221020}}]\label{corollary:genericmultiword}
The set of cyclically reduced words that are not proper powers and are Whitehead minimal is exponentially generic in $\C$.
The set of words that are not proper powers and whose cyclic reduction is Whitehead
minimal is exponentially generic in $\F$.
\end{theorem}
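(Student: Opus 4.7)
The plan is to decompose the claim into two separately exponentially generic properties — (i) not being a proper power, and (ii) having Whitehead minimal cyclic reduction — and invoke the noted fact that intersections of finitely many exponentially generic sets are exponentially generic. The $\F$ version follows from the $\C$ version because the proportion of length-$n$ words in $\F$ whose cyclic reduction is shorter by more than $k$ decays exponentially in $k$ uniformly in $n$, so an exponentially generic condition on $\C$ pulls back along the cyclic-reduction map to an exponentially generic condition on $\F$.

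Part (i) is a direct counting estimate. A cyclically reduced proper power $w = u^k$ with $k \geq 2$ satisfies $|w| = k|u| \geq 2|u|$, so
\[
\#\{w \in \C : |w|\leq n,\ w \text{ a proper power}\} \leq \sum_{d\leq n/2}\#\{u \in \C : |u|=d\}\cdot\lfloor n/d\rfloor = O\bigl(n(2r-1)^{n/2}\bigr),
\]
which is exponentially smaller than $\#\{w \in \C : |w|\leq n\}=\Theta((2r-1)^n)$.

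For part (ii), only finitely many Whitehead automorphisms can change cyclic length: first-kind Whitehead automorphisms and inner automorphisms preserve cyclic length, and the second-kind Whitehead automorphisms form a finite set. Fix such a $\phi$ and let $S_\phi \subseteq \C$ be the set of cyclically reduced words whose cyclic length is strictly decreased by $\phi$. The change in cyclic length under $\phi$ is a sum of bounded local contributions from cyclically adjacent pairs of letters in $w$, since $\phi$ replaces each generator by a bounded-length word; hence $S_\phi$ is a regular subset of $\C$ cut out by a Markov-type condition, and its exponential growth rate equals the spectral radius of a substochastic transfer matrix obtained by restricting the transition matrix for $\C$. The main obstacle is to establish a strict spectral gap — that this spectral radius is strictly less than $\log(2r-1)$ — or equivalently that the ratio of the cyclic length of $\phi(w)$ to that of $w$ tends to some $\lambda_\phi > 1$ for typical long $w \in \C$, whereupon a standard large-deviation argument delivers the exponential rareness of $S_\phi$. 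This is the technical heart of Kapovich--Schupp--Shpilrain's proof in \cite{MR2221020}; combining it with (i) over the finitely many length-changing Whitehead automorphisms yields the theorem, with the overall exponential decay rate determined by the slowest such automorphism.
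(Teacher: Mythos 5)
The paper does not actually prove this statement: it is imported wholesale from Kapovich--Schupp--Shpilrain \cite{MR2221020} (their Theorem~B(1)), so the only thing to compare your proposal against is that source. Your outline is a faithful reconstruction of their strategy, and the elementary pieces you do carry out are correct: the counting bound in part~(i) showing proper powers are exponentially rare in $\C$ is fine, and the fiber-size estimate behind the pullback from $\C$ to $\F$ (each cyclically reduced word of length $m$ has on the order of $(2r-1)^{(n-m)/2}$ preimages of length $n$, so long cancellation is exponentially rare) is the same bookkeeping the paper itself does in \fullref{lemma:middlethirdsurvives}. But as written your proposal is a proof sketch rather than a proof: you explicitly defer the technical heart --- the strict spectral gap, equivalently the large-deviation estimate showing each second-kind Whitehead automorphism generically increases cyclic length by a definite factor --- to the very paper being cited. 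In context that is defensible, since the paper does exactly the same thing with a citation, but you should be clear that nothing in your write-up replaces that step.

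There is one genuine logical gap beyond the deferral. Your reduction of Whitehead minimality to the finitely many second-kind Whitehead automorphisms is not justified by the observation that first-kind and inner automorphisms preserve cyclic length. Whitehead minimality quantifies over \emph{all} of $\Aut(\F)$, and a general automorphism certainly can change cyclic length; what makes it sufficient to check the finite set of Whitehead automorphisms is Whitehead's peak reduction theorem: if $|[w]|$ is not minimal in its $\Aut(\F)$--orbit, then some Whitehead automorphism strictly decreases it. You use this implicitly but never invoke it, and without it the argument proves only minimality among Whitehead automorphisms, not the theorem. A smaller point: your pullback from $\C$ to $\F$ needs failure estimates on spheres (fixed word length) in $\C$, whereas exponential genericity as defined in the paper is a statement about balls; the underlying Kapovich--Schupp--Shpilrain bounds do hold at the sphere level, and the conversion is the content of \fullref{lemma:rareinspheresimpliesrareinballs}, but the direction you need (balls give spheres) is false in general and so this hypothesis should be stated explicitly.
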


\begin{proposition}\label{prop:growthsensitive}
Let $\mot$ be a word in $\F$.
The subset of words that contain $\mot$ as a subword is exponentially
generic in $\F$. The subset of cyclically reduced words that contain
$\mot$ as a subword is exponentially generic in $\C$.
\end{proposition}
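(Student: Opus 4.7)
The plan is to bound the number of reduced words of length $n$ that avoid $\mot$ as a subword by $C\lambda^n$ for some $\lambda<2r-1$, then divide by the total $2r(2r-1)^{n-1}$ to conclude exponential rarity of the avoiders. The statement for $\C$ will follow by direct comparison.

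The key step is a block estimate. Let $k=|\mot|$ and $K=2k+1$. I would show that for every letter $\ell_0\in\basis^{\pm}$, at most $(1-\epsilon)(2r-1)^K$ of the $(2r-1)^K$ reduced length-$K$ words $v=v_1\cdots v_K$ with $v_1\neq\inv{\ell_0}$ avoid $\mot$, where $\epsilon:=(2r-2)/(2r-1)^{k+1}>0$. To prove this, I would count those extensions in which $v_{k+1}\cdots v_{2k}=\mot$. The prefix $v_1\cdots v_k$ must be a reduced length-$k$ word with $v_1\neq\inv{\ell_0}$ and $v_k\neq\inv{\mot_1}$ (the latter so that $v_k\mot_1$ is reduced); picking letters sequentially gives $(2r-1)^{k-1}$ choices for $v_1,\ldots,v_{k-1}$ and at least $2r-2$ choices for $v_k$ (forbidding two possibly-coincident letters), totalling at least $(2r-1)^{k-1}(2r-2)$ valid prefixes. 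The middle is determined, and the suffix $v_{2k+1}\neq\inv{\mot_k}$ has $2r-1$ choices, yielding at least $(2r-2)(2r-1)^k$ extensions containing $\mot$, which establishes the estimate.

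With the block estimate in hand, I would partition a reduced length-$n$ word as $v=B_1B_2\cdots B_mR$ with $|B_i|=K$ and $|R|<K$. If $v$ avoids $\mot$, then each $B_i$, as a length-$K$ subword of $v$, also avoids $\mot$. Applying the block estimate to each $B_i$ in turn (conditional on the last letter of $B_{i-1}$) bounds the number of such $v$ by $C(1-\epsilon)^m(2r-1)^n$ for a constant $C$ absorbing the leftover and first-block contributions. Dividing by $2r(2r-1)^{n-1}$ gives exponential decay. For the statement in $\C$, the standard count $\#\{v\in\C:|v|=n\}=(2r-1)^n+(2r-1)(-1)^n$ is of order $(2r-1)^n$, and any cyclically reduced avoider is in particular a reduced avoider, so the count of cyclically reduced avoiders is at most $C(1-\epsilon)^{n/K}(2r-1)^n$; dividing yields the required exponential decay.

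The main obstacle is setting up the core block estimate cleanly, particularly handling the edge case $k=1$ where the prefix factor $v_1\cdots v_{k-1}$ is empty and the single remaining letter $v_1=v_k$ must avoid up to two forbidden letters; the lower bound $2r-2$ survives this case since $r\geq 2$. The iteration itself is a standard multiplicative argument requiring only careful bookkeeping of the boundary and leftover terms.
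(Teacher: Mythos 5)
Your proof is correct, but it takes a genuinely different route from the paper: the paper does not prove this proposition at all, instead citing it as well known (referring to Grigorchuk--de la Harpe and Lind--Marcus for the statement in $\F$, and to Mackay for the statement in $\C$). Your block-decomposition argument is essentially the elementary transfer-matrix/subshift-entropy proof underlying those references, and it would make the paper self-contained: splitting a reduced word of length $n$ into blocks of length $K=2|\mot|+1$, and showing that, conditioned on the last letter of the previous block, a proportion at least $\epsilon=(2r-2)/(2r-1)^{|\mot|+1}$ of the $(2r-1)^K$ admissible blocks contains $\mot$ at the fixed position $k+1,\dots,2k$, yields at most $C(1-\epsilon)^{n/K}(2r-1)^n$ avoiders of length $n$. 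Your prefix count, the handling of the possible coincidence of the two forbidden letters for $v_k$, and the $k=1$ edge case are all sound, and deducing the $\C$ case by comparison with the reduced count is fine since cyclically reduced avoiders are in particular reduced avoiders. Two small points to tidy. First, your estimates are on spheres, whereas the paper defines genericity via balls, so you should finish by invoking \fullref{lemma:rareinspheresimpliesrareinballs} (and its evident analogue for $\C$, valid because $\#\{\mott\in\C : |\mott|=n\}$ is of order $(2r-1)^n$). Second, your exact formula for the number of cyclically reduced words of length $n$ is wrong: the correct count is $(2r-1)^n+r+(r-1)(-1)^n$, not $(2r-1)^n+(2r-1)(-1)^n$ (your expression gives $0$ at $n=1$, where the true count is $2r$); this is harmless, since your argument only uses that the count is of order $(2r-1)^n$, but the formula should be corrected or replaced by the crude bounds $(2r-1)^n\leq\#\{\mott\in\C:|\mott|=n\}\leq 2r(2r-1)^{n-1}$.
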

\begin{proof}
This fact is well known. For $\F$, see \cite[Section~2]{GriDeL97} or
  \cite[Corollary~4.4.9]{LinMar95}. For $\C$, see
  \cite[Lemma~2.5]{Mac12}.  
Note that the statement of the latter result includes the assumption
that $|\mot|>4$, but in fact the estimate and proof given there are
valid for any subword $\mot$ once the random word length is greater
than $16$.
\end{proof}

It is also true that for a fixed word $\mot\in\F$ the set of words
that contain $\mot$ as a subword of their cyclic reduction is
exponentially generic. 
This can be deduced from \cite[Proposition~6.2]{MR2221020}, which is
stated without proof.
For completeness, we give a proof in
\fullref{prop:cyclicreductiongrowthsensitive}.
The proof uses a few auxiliary lemmas.

\begin{lemma}\label{lemma:rareinspheresimpliesrareinballs}
Let $A\subset \F$.  Let $S_n$ be the set of words in $\F$ of length exactly $n$, and suppose there exist $a>0$, $b\in \R$ so that
\[\frac{\#A\cap S_n}{\#S_n}\leq \exp(b -an)\]
for all sufficiently large $n$.  Then $A$ is exponentially rare in $\F$.
\end{lemma}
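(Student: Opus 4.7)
The plan is to reduce the statement about balls to the assumed estimate on spheres, using the standard fact that $\#B_n$ and $\#S_n$ differ only by a bounded multiplicative constant for a free group of rank $r>1$. Specifically, $\#S_k = \frac{2r}{2r-1}(2r-1)^k$ for $k\geq 1$, so $\#B_n \leq \frac{2r-1}{2r-2}\#S_n$, and in particular $\#B_n \geq \#S_n$. Thus it suffices to prove that $\#(A\cap B_n)/\#S_n$ decays exponentially.

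Next I would write $\#(A\cap B_n) = \sum_{k=0}^{n} \#(A\cap S_k)$ and split the sum at the threshold $n_0$ above which the hypothesis $\#(A\cap S_k) \leq \exp(b-ak)\#S_k$ holds. The small-$k$ piece $\sum_{k\leq n_0} \#(A\cap S_k)$ is a constant $C_0$, and dividing by $\#S_n$ clearly decays exponentially in $n$. For the tail, I would substitute $j = n-k$ and use $\#S_k = \#S_n (2r-1)^{k-n}$ to get
\[
\sum_{k=n_0+1}^{n}\exp(b-ak)\#S_k \;=\; \#S_n\,\exp(b-an)\sum_{j=0}^{n-n_0-1}\left(\frac{e^a}{2r-1}\right)^{j}.
\]

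A brief case analysis on $\rho := e^a/(2r-1)$ finishes the estimate. If $\rho<1$, the inner sum is bounded and the tail is at most $C\cdot\#S_n \exp(-an)$. If $\rho>1$, the inner sum is at most $\rho^n/(\rho-1)$ and the tail is at most $C\cdot\#S_n (2r-1)^{-n}$. If $\rho=1$, the inner sum is at most $n$, giving $\#S_n\cdot n\exp(-an)$, which for any $a'<a$ is at most $C\cdot\#S_n \exp(-a'n)$ for $n$ large. In every case, dividing by $\#S_n$ and combining with the constant contribution $C_0/\#S_n$ yields a bound of the form $\exp(b'-\alpha n)$ with $\alpha>0$, establishing exponential rarity.

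The argument is genuinely routine once the balls-versus-spheres reduction is made, so there is no serious obstacle; the only mild annoyance is the case $\rho=1$, where one has to concede a sliver of the exponential rate to absorb the linear factor $n$, but the resulting exponent is still positive.
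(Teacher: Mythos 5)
Your proof is correct and is exactly the ``straightforward computation left to the reader'' in the paper, which asserts without details that any rate $a'<\min\{a,\ln(2r-1)\}$ works. Your decomposition of $\#(A\cap B_n)$ over spheres and the case analysis on $\rho=e^a/(2r-1)$ recover precisely that rate, with the boundary case $\rho=1$ correctly explaining why the strict inequality $a'<a$ is needed there.
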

\begin{proof}
  In fact one can show that for any $0<a'<\min\{a,\ln(2r-1)\}$, there is a $b'$ so that $\frac{\#A\cap B_n}{\#B_n}\leq \exp(b'-a'n)$.  This straightforward computation is left to the reader.
\end{proof}

Let $\lceil x\rceil$ denote the least integer greater than or equal 
to $x$. 
\begin{definition}
  Define the \emph{middle third} of a word $\mot\in\F$ of length
  $l\geq 5$ to be the
  subword of $\mot$
obtained by discarding the first and last $\lceil \frac{l}{3}\rceil$ letters. 
\end{definition}

\begin{lemma}\label{lemma:middlethridgeneric}
  Let $\mot$ be a word in $\F$. The set of words containing $\mot$ as
  a subword of their middle third is exponentially generic in $\F$.
\end{lemma}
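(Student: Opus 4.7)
The plan is to reduce the statement to \fullref{prop:growthsensitive} by showing that the middle third of a reduced word of length $n$ is essentially an unconstrained reduced word of length roughly $n/3$. Avoiding $\mot$ in the middle third is then as rare as avoiding $\mot$ altogether, except with length rescaled by $1/3$, and the exponential rate carries through.

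By \fullref{lemma:rareinspheresimpliesrareinballs} it suffices to bound the proportion on the sphere. For $n$ sufficiently large, set $k = \lceil n/3\rceil$ and $m = n - 2k$, so that $m$ grows linearly in $n$. Every $u\in\F$ with $|u|=n$ factors uniquely as $u = u_1 v u_3$ where $|u_1|=|u_3|=k$ and $v$ is the middle third. The key claim is that the ``middle third'' map $\mu\co S_n \to S_m$, $u\mapsto v$, has fibers of constant cardinality $(2r-1)^{2k}$. Indeed, for any reduced $v \in S_m$ with first letter $b$ and last letter $c$, the valid prefixes $u_1$ are exactly the reduced words of length $k$ whose last letter is not $b^{-1}$, and building such a word from its last letter backward gives $(2r-1)^k$ choices, independent of $b$. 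Symmetrically there are $(2r-1)^k$ valid suffixes, independent of $c$.

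Because $\mu$ has constant fiber size, the proportion of elements of $S_n$ whose middle third avoids $\mot$ is equal to the proportion of elements of $S_m$ that avoid $\mot$:
\begin{equation*}
\frac{\#\{u\in S_n : \mot \text{ is not a subword of the middle third of } u\}}{\#S_n} = \frac{\#\{v\in S_m : \mot \text{ is not a subword of } v\}}{\#S_m}.
\end{equation*}
By \fullref{prop:growthsensitive} this latter proportion decays exponentially on balls, and because $\#B_m/\#S_m$ is bounded independently of $m$ (the quotient tends to $(2r-1)/(2r-2)$), it decays exponentially on spheres as well. Since $m \geq n/3 - 1$, the rate of decay in $n$ is at worst one third of the rate in $m$, still exponential.

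The only real piece of bookkeeping is the fiber-count identity, which relies on the observation that in a free group of rank $r$ the number of reduced words of length $k$ with a single prescribed forbidden first (or last) letter is $(2r-1)^k$ regardless of which letter is forbidden; once this is in hand, the rest of the argument is a direct invocation of \fullref{prop:growthsensitive} and \fullref{lemma:rareinspheresimpliesrareinballs}.
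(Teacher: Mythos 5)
Your proposal is correct and follows essentially the same route as the paper's proof: the paper likewise asserts that the proportion of length-$n$ words whose middle third avoids $\mot$ equals the proportion of length-$m$ words avoiding $\mot$ (with $m=n-2\lceil n/3\rceil$), then invokes \fullref{prop:growthsensitive} and \fullref{lemma:rareinspheresimpliesrareinballs}. The only difference is that you spell out the constant-fiber-size count $(2r-1)^{2k}$ justifying that equality, which the paper leaves implicit, and your bookkeeping there is accurate.
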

\begin{proof}
The ratio of the number of words of length $l$ not containing $\mot$
as a subword of their middle third to the number of words of length $l$
is equal to the ratio of the number of words of length $m=l-2\lceil\frac{l}{3}\rceil$ not containing $\mot$ as
a subword to the number of words of length $m$. 
By \fullref{prop:growthsensitive}, this ratio decays exponentially in
$m$, but $m$ is linear in $l$, so the ratio decays exponentially in $l$.
Conclude by applying \fullref{lemma:rareinspheresimpliesrareinballs}.
\end{proof}

\begin{lemma}\label{lemma:middlethirdsurvives}
  The set of words $\mot\in\F$ such that the middle third of $\mot$
  is a subword of the cyclic reduction of $\mot$ is exponentially
  generic in $\F$.
\end{lemma}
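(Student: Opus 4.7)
The plan is to reduce to a counting estimate on the sphere $S_l$ and then invoke \fullref{lemma:rareinspheresimpliesrareinballs}. First I would set up the bookkeeping: for a freely reduced word $\mot = x_1 \cdots x_l$, define its \emph{cancellation depth} $d$ to be the largest integer $d \geq 0$ such that $x_i = x_{l+1-i}^{-1}$ for all $i \leq d$. Then the cyclic reduction of $\mot$ is the subword $x_{d+1} x_{d+2} \cdots x_{l-d}$, and it contains the middle third $x_{\lceil l/3\rceil+1} \cdots x_{l-\lceil l/3\rceil}$ as a subword precisely when $d \leq \lceil l/3 \rceil$. So the lemma reduces to showing that the proportion of words in $S_l$ with $d > \lceil l/3\rceil$ decays exponentially in $l$.

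To count such words, set $k = \lceil l/3\rceil + 1$. A word with $d \geq k$ is determined by its first $k$ letters together with the middle block $x_{k+1}\cdots x_{l-k}$, because the last $k$ letters are forced to be the formal inverse of the reverse of the prefix. Crude upper bounds on freely reduced words yield at most $2r(2r-1)^{k-1}$ choices of prefix and at most $2r(2r-1)^{l-2k-1}$ choices of middle block. Dividing by $\#S_l = 2r(2r-1)^{l-1}$ gives a ratio bounded above by a constant multiple of $(2r-1)^{-\lceil l/3 \rceil}$, which decays exponentially in $l$ since $r \geq 2$. The final step is simply to apply \fullref{lemma:rareinspheresimpliesrareinballs} to promote this sphere estimate to exponential rarity in $\F$.

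I do not anticipate a real obstacle here: the argument is a direct counting estimate closely parallel in spirit to \fullref{lemma:middlethridgeneric}. The only points needing a little care are (i) the brief verification that the cancellation depth is precisely what controls whether the middle third appears as a subword of the cyclic reduction, and (ii) the fact that we only need an upper bound on the number of ``bad'' words, so any additional freely-reduced compatibility conditions between the prefix and the middle block can be safely ignored without weakening the conclusion. Edge cases where $l$ is too small for the middle block to be nonempty contribute only a finite initial segment of words and do not affect the asymptotic ratio.
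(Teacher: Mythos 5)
Your proposal is correct and takes essentially the same route as the paper: both arguments count the ``bad'' words on the sphere $S_l$ by observing they have the form $u v u^{-1}$ with $u$ a long prefix forcing the suffix, bound the ratio by roughly $(2r-1)^{-\lceil l/3\rceil}$, and then promote to exponential rarity in $\F$ via \fullref{lemma:rareinspheresimpliesrareinballs}. The only cosmetic difference is that you count words of cancellation depth $d\geq\lceil l/3\rceil+1$ exactly, while the paper bounds the slightly larger set with $d\geq\lceil l/3\rceil$; both suffice since only an upper bound is needed.
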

\begin{proof}
Let $\therank$ be the rank of $\F$.
Let $n(l)$ be the number of words of length $l$ for which cyclic reduction reduces
  length by at least $2\lceil\frac{l}{3}\rceil$. 
Every such word is of the form $\mot \mott \mot^{-1}$, where $\mot$ is
of length $\lceil\frac{l}{3}\rceil$, $\mott$ is of length
$m=l-2\lceil\frac{l}{3}\rceil$, and the last letter of $\mot$ is not the
inverse of the first letter of $\mott$.
Thus, $n(l)\leq 2\therank(2\therank-1)^{(m-1)}\cdot
(2\therank-1)^{\lceil\frac{l}{3}\rceil}$.
In fact, the inequality is strict because for some choices there
will be a free reduction in $\mott \mot^{-1}$, resulting in a word of
length less than $l$. We have:
\[\frac{n(l)}{2\therank(2\therank-1)^{(l-1)}}<\frac{2\therank(2\therank-1)^{(m-1)}\cdot
(2\therank-1)^{\lceil\frac{l}{3}\rceil}}{2\therank(2\therank-1)^{(l-1)}}\leq\frac{1}{(\sqrt[3]{2\therank-1})^l}\]
Now, \fullref{lemma:rareinspheresimpliesrareinballs} implies
 the set of words $\mot$ for which the middle third of $\mot$ survives cyclic
reduction of $\mot$ is exponentially generic in $\F$.
\end{proof}

\begin{proposition}\label{prop:cyclicreductiongrowthsensitive}
  Let $\mot$ be a word in $\F$. The set of words that contain $\mot$
  as a subword of their cyclic reduction is exponentially generic in $\F$.
\end{proposition}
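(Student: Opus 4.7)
The plan is to combine the two preceding lemmas. By \fullref{lemma:middlethridgeneric}, the set $A$ of words containing $\mot$ as a subword of their middle third is exponentially generic in $\F$. By \fullref{lemma:middlethirdsurvives}, the set $B$ of words whose middle third appears as a subword of their cyclic reduction is exponentially generic in $\F$. Since finite intersections of exponentially generic sets are exponentially generic, $A\cap B$ is exponentially generic.

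Next I observe that $A\cap B$ is contained in the set of words whose cyclic reduction contains $\mot$ as a subword: if $\mott\in A\cap B$, then $\mot$ sits inside the middle third of $\mott$, and that middle third sits inside the cyclic reduction of $\mott$ as a subword, so $\mot$ sits inside the cyclic reduction of $\mott$ as a subword by transitivity of the subword relation.

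Since a superset of an exponentially generic set is exponentially generic, the conclusion follows. The step I expect to be trivial here is the combinatorial transitivity argument; all the real work has already been packaged into the two preceding lemmas and the general closure property of exponential genericity under finite intersections.
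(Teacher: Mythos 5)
Your proposal is correct and is essentially identical to the paper's own proof: both take the intersection of the exponentially generic sets from \fullref{lemma:middlethridgeneric} and \fullref{lemma:middlethirdsurvives}, note that this intersection is contained in the set of words whose cyclic reduction contains $\mot$, and conclude using closure of exponential genericity under finite intersection and passage to supersets. The only difference is that you spell out the transitivity-of-subwords step and the superset step, which the paper leaves implicit.
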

\begin{proof}
The set of words containing $\mot$ as a subword of their cyclic
reduction contains the intersection of the set of words containing $\mot$ as a
 subword of their middle third with the set of words whose middle
 third survives cyclic reduction. 
By \fullref{lemma:middlethridgeneric} and
\fullref{lemma:middlethirdsurvives}, both of these sets are
exponentially generic in $\F$, so their intersection is as well.
\end{proof}

\begin{definition}
Let $B\subset\F$. 
  A word $\mot\in\F$ is \emph{poison to property} $\mathcal{P}$ in $B$ if no word 
  of $B$ containing $\mot$ as a subword of its cyclic reduction enjoys $\mathcal{P}$. 

A word $\mot$ is \emph{(exponentially) generically poison to}
$\mathcal{P}$ in $B$ if there exists a (exponentially) generic subset 
$A\subset B$ such that $w$ is poison to $\mathcal{P}$ in $A$. 
\end{definition}

A consequence of \fullref{prop:growthsensitive} and \fullref{prop:cyclicreductiongrowthsensitive} is:
\begin{corollary}\label{corollary:poison}
  If there exists a word that is (exponentially) generically poison to 
  $\mathcal{P}$ in $\F$ or $\C$ then 
  $\mathcal{P}$ is (exponentially) rare in $\F$ or $\C$, respectively. 
\end{corollary}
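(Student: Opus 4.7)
The plan is to unpack the definition of ``generically poison'' and combine it with the growth-sensitivity results just established. Suppose $\mot$ is exponentially generically poison to $\mathcal{P}$ in $\F$, so that there is an exponentially generic set $A \subseteq \F$ such that no word in $A$ whose cyclic reduction contains $\mot$ as a subword has $\mathcal{P}$.

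First, I would invoke \fullref{prop:cyclicreductiongrowthsensitive} to conclude that the set $C$ of words in $\F$ containing $\mot$ as a subword of their cyclic reduction is exponentially generic. Next, using the observation (noted just after the definition of genericity) that a finite intersection of exponentially generic sets is exponentially generic, the set $A \cap C$ is exponentially generic in $\F$. By the defining property of $A$, every word in $A \cap C$ fails $\mathcal{P}$; hence the set of words satisfying $\mathcal{P}$ is contained in the complement $\F \setminus (A \cap C)$, which is exponentially rare by definition. This proves $\mathcal{P}$ is exponentially rare in $\F$.

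The statement for $\C$ is handled identically, with \fullref{prop:growthsensitive} replacing \fullref{prop:cyclicreductiongrowthsensitive}: for a cyclically reduced word, being a subword of the cyclic reduction coincides with being a subword, so the set of words in $\C$ containing $\mot$ as a subword is exponentially generic in $\C$, and the rest of the argument goes through verbatim. The non-exponential case requires no new ideas: finite intersections of generic sets are generic, and one appeals to the non-exponential versions of the same propositions (which are implied by their exponential counterparts).

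There is essentially no obstacle here, as this is a bookkeeping corollary; the only thing to be careful about is ensuring one uses the ``subword of cyclic reduction'' version of growth-sensitivity when working in $\F$, rather than the simpler subword version, since this matches the definition of poison. Once that alignment is in place, exponential genericity of the intersection is immediate.
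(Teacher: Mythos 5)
Your proposal is correct and is precisely the argument the paper leaves implicit: the corollary is stated there as ``a consequence of'' \fullref{prop:growthsensitive} and \fullref{prop:cyclicreductiongrowthsensitive}, and your write-up simply spells out the intended intersection argument, including the right alignment of the ``subword of cyclic reduction'' version with the definition of poison and the observation that finite intersections of (exponentially) generic sets are (exponentially) generic. Nothing is missing, and the handling of the $\C$ case (where cyclically reduced words equal their cyclic reductions) matches the paper's intent exactly.
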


\section{Whitehead Graphs}\label{sec:whitehead}

\begin{definition}
A multiword  $\multimot=\{\mot_1,\dots,\mot_k\}$ such that each
$\mot_i$ is not a proper power, and such that $\mot_i$ is not
conjugate to $\mot_j$ or $\inv{\mot}_j$ for all $i\neq j$, is called \emph{unramified}.
\end{definition}

A multiword is called cyclically reduced if all of its elements are
cyclically reduced with respect to the fixed basis $\basis$ of $\F$.

Let $\tree$ be the Cayley graph of $\F$ with respect to $\basis$,
which is a $2|\basis|$--valent tree.
Let $\closure{\tree}=\tree\cup\bdry\tree$ denote the compactification of $\tree$ by its
Gromov boundary $\bdry\tree$.

\begin{definition}
  If $\multimot$ is a cyclically reduced multiword,
  define $\linesw$ to be the collection of distinct bi-infinite geodesics
  $[f\inv{\mot}^\infty,f\mot^\infty]\subset\closure{\tree}$ where
  $\mot\in\multimot$ and $f\in\F$.
\end{definition}

\begin{definition}\label{def:whiteheadgraph}
Let $\multimot$ be a cyclically reduced multiword.
Let $\X$ be a connected subset of $\tree$.
Let $\closure{\X}$ be
its closure in $\closure{\tree}$.
  The \emph{Whitehead graph of $\multimot$ over $\X$}, denoted
  $\Wh(\X)$
  is a graph whose vertices are in bijection
  with connected components of $\closure{\tree}\setminus\closure{\X}$.
Distinct vertices are joined by an edge for each $\line\in\linesw$ with
endpoints in the corresponding complementary components of $\closure{\X}$.
\end{definition}

\begin{remark}
  The Whitehead graph depends on $\multimot$ via $\linesw$. We
  suppress $\multimot$ from the notation, as it will always be clear
  from context.
\end{remark}

The following easy lemma clarifies the definition:
\begin{lemma}
  Let $C_1$ and $C_2$ be components of $\tree\setminus \X$.  Then
  $C_1$ and $C_2$ are connected by an edge in $\Wh(\X)$ if and only if the label of the shortest path joining them is a subword of $\mot^{\infty}$ or $\mot^{-\infty}$ for some $\mot\in \multimot$.
\end{lemma}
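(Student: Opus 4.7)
The plan is to prove both implications using tree combinatorics together with the cyclic reduction of $\multimot$. The two key facts are: first, because $\X$ is a connected subset of the tree $\tree$, each component of $\tree\setminus\X$ is adjacent to $\X$ through a single vertex (otherwise two distinct access vertices together with paths in $\X$ and in the component would yield a cycle in $\tree$); second, because each $\mot\in\multimot$ is cyclically reduced, the bi-infinite periodic word $\ldots\mot\mot\mot\ldots$ is freely reduced and so labels a bi-infinite geodesic in $\tree$, namely an axis of a conjugate of $\mot$, which therefore lies in $\linesw$.

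For the forward direction, suppose $C_1$ and $C_2$ are joined by an edge in $\Wh(\X)$ coming from a line $\line=[f\inv{\mot}^\infty, f\mot^\infty]\in\linesw$ with endpoints at infinity in $\closure{C_1}$ and $\closure{C_2}$. Since $\X$ separates $C_1$ from $C_2$ in $\tree$ and $\line$ is a geodesic, $\line\cap\X$ is a non-empty connected arc. Let $u_i\in C_i$ be the vertex where $\line$ passes out of $\X$ into $C_i$. The unique shortest path from $C_1$ to $C_2$ is then the sub-arc of $\line$ from $u_1$ to $u_2$, and its label, read in one orientation, is a subword of the bi-infinite periodic label $\ldots\mot\mot\mot\ldots$ of $\line$, hence a subword of $\mot^\infty$; reversing orientation gives a subword of $\mot^{-\infty}$.

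For the reverse direction, suppose the label of the shortest path $p$ from $C_1$ to $C_2$ is a subword of $\mot^\infty$ for some $\mot\in\multimot$. I extend $p$ on both ends by continuing to read letters from the bi-infinite periodic word $\ldots\mot\mot\mot\ldots$ that extends $p$'s label. Because this bi-infinite word is freely reduced, the extension is a bi-infinite geodesic $\line$ in $\tree$; by construction it is an axis of a conjugate of $\mot$, so $\line\in\linesw$. It remains to verify that the two endpoints at infinity of $\line$ lie in $\closure{C_1}$ and $\closure{C_2}$. The endpoint $u_1$ of $p$ in $C_1$ must be the unique vertex of $C_1$ adjacent to $\X$; every other vertex of $C_1$ has only $C_1$-neighbors. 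Thus the ray extending $p$ past $u_1$ first moves from $u_1$ to a vertex of $C_1$ (it cannot backtrack to the adjacent $\X$-vertex) and thereafter cannot leave $C_1$; its endpoint at infinity is in $\closure{C_1}$. Symmetrically on the $C_2$ side, so $\line$ gives an edge in $\Wh(\X)$ from $C_1$ to $C_2$.

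The main point requiring care is verifying in the reverse direction that the periodic extension really remains in the correct component of $\tree\setminus\X$ and does not inadvertently re-enter $\X$. The essential input is the uniqueness-of-adjacent-vertex observation above, after which non-backtracking of the freely reduced bi-infinite word forces the ray to remain in $C_1$ (respectively $C_2$). Everything else is a direct unpacking of \fullref{def:whiteheadgraph}.
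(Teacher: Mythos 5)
Your proof is correct, and it matches the paper's intent exactly: the paper states this lemma without proof (introducing it as an ``easy lemma''), and your argument --- uniqueness of the attaching edge of each component of $\tree\setminus\X$ when $\X$ is connected, plus the observation that cyclic reducedness of $\mot$ makes the periodic bi-infinite word label a geodesic of the form $[f\inv{\mot}^\infty,f\mot^\infty]\in\linesw$ --- is precisely the intended unpacking of \fullref{def:whiteheadgraph}. Both directions are sound, including the step you rightly flag as the one needing care, namely that non-backtracking of the freely reduced periodic extension together with the unique-attaching-vertex fact forces the two rays to stay in $C_1$ and $C_2$.
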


\begin{remark}
If $\multimot$ is unramified and cyclically reduced, and if $\X=*$ is
a single vertex, then the vertices of $\Wh(*)$ are in bijection with $\basis^\pm$, and there is
one edge from vertex $\inv{x}$ to vertex $y$ for every occurrence of
the subword $xy$ in $\multimot$, with words of $\multimot$ treated as
cyclic words.
This is the classical definition of the Whitehead graph. 
\end{remark}

Let $|\mot|$ denote the word length of $\mot$ with respect to the basis $\basis$ of $\F$.
Let $|[\mot]|$ denote the minimal word length of an element of the conjugacy class of $\multimot$.

\begin{definition}
An unramified, cyclically reduced multiword $\multimot=\{\mot_1,\dots,\mot_k\}$ is \emph{Whitehead minimal} if for every automorphism $\alpha\in\Aut(\F)$ we have:
\[\sum_{i=1}^k|\mot_i|\leq\sum_{i=1}^k|[\alpha(\mot_i)]|\]
\end{definition}

\section{Relative Splittings}\label{sec:split}

\begin{definition}
  A \emph{splitting of $\F$ relative to $\multimot$} is a splitting of
  $\F$ as a graph of groups such that each $\mot\in\multimot$ is elliptic. 
\end{definition}

The following lemma is essentially due to Whitehead \cite{Whi36}. See
also \cite{Ota92,Sta99,Sto97}.
\begin{lemma}\label{lem:freesplitting}
If $\Wh(*)$ is connected and has no cut vertices then
$\F$ does not split freely relative to $\multimot$.
\end{lemma}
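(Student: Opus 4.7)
I would prove the contrapositive. Assume $\F$ admits a nontrivial free splitting relative to $\multimot$; I will show that $\Wh(*)$ is disconnected or has a cut vertex.

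The splitting is encoded by a minimal action of $\F$ on a simplicial tree $T$ with trivial edge stabilizers in which every $\mot\in\multimot$ is elliptic. Triviality of edge stabilizers forces every nontrivial elliptic element to fix a unique vertex of $T$. Fix an $\F$-equivariant simplicial map $\phi\co \tree \to T$, subdividing $\tree$ as needed so that $\phi$ sends each edge to an edge or a vertex; write $v=\phi(*)$. For each line $\l\in\linesw$ the image $\phi(\l)$ has bounded diameter in $T$, lying in a ball around the unique fixed vertex of the associated elliptic conjugate of some $\mot\in\multimot$.

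The central step, following the tree-theoretic arguments of Stallings \cite{Sta99} and Otal \cite{Ota92}, is to use $\phi$ to extract a Whitehead-type partition of $\basis^{\pm}$ witnessing non-$2$-connectivity of $\Wh(*)$. Choose an edge $e$ of $T$ and consider the $\F$-equivariant wall $W\subset\tree$ consisting of midpoints of edges of $\tree$ that $\phi$ sends across the midpoints of $\F$-translates of $e$. The wall decomposes $\tree$ into components, which in turn induce a natural partition of the vertices of $\Wh(*)$ coming from the local structure of these components at $*$ (e.g., one labels the half-branch $H_x$ by the first edge direction at $v$ taken by $\phi(\overline{1,x})$, with a distinguished class for the half-branches corresponding to generators in the stabilizer of $v$). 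The bounded-diameter property of lines, together with the unique-fixed-vertex property, forces edges of $\Wh(*)$ to respect this partition in a controlled way, yielding either a disconnection or a cut vertex.

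The main obstacle is this last step: the translation from the wall structure into a separation of $\Wh(*)$ is delicate, since a line $\l$ can cross $W$ many times near $*$ and $T$ may have infinite-degree vertices (so bounded diameter does not mean finite cardinality of $\phi(\l)$). A naive partition by ``first edge direction at $v$'' is not quite a disconnection — one sees this already for a primitive element such as $\mot=ab$ in $\F_2$, where $\Wh(*)$ is disconnected but not along that partition. Handling this properly requires the full combinatorial machinery developed by Whitehead \cite{Whi36} and reworked in the cited references; the key input is the rigidity imposed by the unique-fixed-vertex property on the equivariantly periodic walk $\phi(\l)$, which pins down a Whitehead-type partition compatible with the splitting and thereby forces a cut vertex or disconnection in $\Wh(*)$.
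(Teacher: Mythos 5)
Your proposal does not contain a complete proof, and you say so yourself. (For calibration: the paper gives no in-house proof either --- it states the lemma as ``essentially due to Whitehead \cite{Whi36}'' with pointers to \cite{Ota92,Sta99,Sto97} --- so the question is whether your sketch could be completed as written.) It cannot, for the reason you identify in your final paragraph: the passage from the wall $W$ to a separation of $\Wh(*)$ \emph{is} the content of Whitehead's cut-vertex lemma, and you defer exactly that step to ``the full combinatorial machinery developed by Whitehead,'' which makes the argument circular at its central point. Your preliminary reductions are correct: with trivial edge stabilizers every nontrivial elliptic element fixes a unique vertex of $T$, and since each line $\l\in\linesw$ is the axis of a conjugate $f\mot f^{-1}$, equivariance plus ellipticity does confine $\phi(\l)$ to a bounded neighborhood of the fixed vertex of that conjugate. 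But nothing in the sketch converts this boundedness into a disconnection or cut vertex of $\Wh(*)$, and your own example $\mot=ab$ in $\F_2$ shows the naive partition by ``first edge direction at $v$'' is not the right one.

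What is missing, concretely, is an efficiency or minimality condition on the equivariant map $\phi$ together with a counting argument. In the tree-theoretic proofs (e.g.\ \cite{Sta99}) one chooses $\phi$ equivariantly so as to minimize the number of edges of $\tree$ mapped across the chosen orbit of midpoints --- a tautness/folding condition --- and only after this normalization does the induced partition of $\basis^{\pm}$ behave: any edge of $\Wh(*)$ joining the two sides is forced through a single vertex class (the directions into the stabilized part), producing the cut vertex, and minimality is precisely what excludes configurations like your $ab$ example, where an unnormalized map lets lines cross $W$ in ways invisible to the local data at $*$. So your overall strategy (contrapositive via the Bass--Serre tree, in the spirit of \cite{Ota92,Sta99}) is the standard and correct one, but as submitted the proposal is a plan plus a citation rather than a proof: the one step that makes the lemma nontrivial is named as an obstacle, not carried out.
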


The next lemma is a consequence of \cite[Lemma~4.9]{CasMac11}:
\begin{lemma}\label{lem:cyclicsplitting}
If $\F$ splits over $\langle\mott\rangle$ relative to $\multimot$ 
  then $\Wh([\inv{\mott}^\infty,\mott^\infty])$
  has more than one connected component.
\end{lemma}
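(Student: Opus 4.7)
The plan is to use the Bass-Serre tree of the hypothesized cyclic splitting. Let $T$ be a Bass-Serre tree for the splitting of $\F$ over $\langle\mott\rangle$ relative to $\multimot$, and let $e$ be an edge of $T$ whose stabilizer is $\langle\mott\rangle$. By the relative hypothesis each $\mot\in\multimot$ is elliptic in $T$, and $\mott$ itself fixes the endpoints $v_+, v_-$ of $e$.

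Choose an $\F$-equivariant continuous map $\phi\from\tree\to T$ sending $\X=[\inv\mott^\infty,\mott^\infty]$ into $e$; this is possible because $\X$ and $e$ are each cocompact under the action of $\langle\mott\rangle$. Let $m$ be the midpoint of $e$; removing $m$ from $T$ leaves two open subtrees $T_+$ and $T_-$. Since $\phi(\X)\subseteq e$, one arranges $\phi^{-1}(m)\subseteq\X$, so that for each component $C$ of $\tree\setminus\X$ we have $\phi(C)\subseteq T_+$ or $\phi(C)\subseteq T_-$. This partitions the components of $\closure\tree\setminus\closure\X$---equivalently, the vertices of $\Wh(\X)$---into two classes $V_+$ and $V_-$.

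Now let $\l=[f\inv\mot^\infty,f\mot^\infty]\in\linesw$ be a line distinct from $\X$. The line $\l$ is the axis in $\tree$ of the element $f\mot f^{-1}$, which is conjugate to $\mot$ and therefore elliptic in $T$. Hence $\phi(\l)$ is contained in the fixed-point subtree of $f\mot f^{-1}$, which is bounded in $T$. It follows that past some point along $\l$ in each direction, $\phi(\l)$ lies entirely on one side of $e$, and in fact the same side in both directions (since $\phi(\l)$ is bounded and connected). Thus the two complementary components of $\closure\X$ containing the endpoints of $\l$ lie in the same class, so no edge of $\Wh(\X)$ crosses the partition $V_+ \sqcup V_-$.

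The main obstacle is checking that both classes are non-empty, so that the partition is genuine and $\Wh(\X)$ is really disconnected. This uses non-triviality of the splitting: if, say, $V_-=\emptyset$, then $\phi(\tree)\subseteq T_+\cup e$; since $\tree$ is $\F$-invariant, so is $T_+\cup e$, but $\F$ acts transitively on the edge orbit $\F\cdot e$ (there is a single orbit of edges for a one-edge splitting), forcing $T_+\cup e=T$ and collapsing the splitting. With both $V_+$ and $V_-$ non-empty and no edges between them, $\Wh(\X)$ has more than one connected component, as required.
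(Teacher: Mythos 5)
The paper itself gives no proof of this lemma---it is quoted as a consequence of \cite[Lemma~4.9]{CasMac11}---so your proposal is a from-scratch attempt, and the Bass--Serre framing is a reasonable one. However, two central steps are asserted rather than proved, and the first of them is essentially the content of the cited lemma. Since $\F$ acts simply transitively on the vertices of $\tree$, an $\F$-equivariant map $\phi\from\tree\to T$ is determined on vertices by $\phi(1)$: necessarily $\phi(g)=g\cdot\phi(1)$. Hence ``sending $\X$ into $e$'' would require $p\cdot\phi(1)\in e$ for every prefix $p$ of $\mott$, which fails for all but very special $\mott$; cocompactness of $\X$ and $e$ under $\langle\mott\rangle$ yields only that $\phi(\X)$ lies in a bounded neighborhood of the fixed subtree of $\mott$ (by the same computation you invoke later for $f\mot f^{-1}$). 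More seriously, ``one arranges $\phi^{-1}(m)\subseteq\X$'' is pure assertion: $\phi^{-1}(m)$ is in general an infinite, $\langle\mott\rangle$-invariant set of points spread through edges all over $\tree$, and nothing about equivariance or cocompactness confines it to the axis. Arranging $\phi^{-1}(m)\subseteq\closure{\X}$ says precisely that the decomposition of $\tree$ dual to the splitting is carried by the single line $\X$, which is the heart of what is to be proved; as written you are assuming the conclusion in a disguised form.

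Even granting that arrangement, the no-crossing step is a non sequitur. That $\phi(\l)$ is bounded and connected does not place it on one side of $m$: a bounded connected set may contain $m$ and straddle both sides. Note that a line $\l$ contributes an edge of $\Wh(\X)$ between distinct components exactly when $\l\cap\X$ is nonempty, and points of $\l\cap\X$ lie in no complementary component, so it is fully compatible with $\phi^{-1}(m)\subseteq\X$ that $\phi$ carries the overlap segment through $m$ while the two rays of $\l\setminus\X$ map into opposite sides---exactly the situation you must exclude. Ellipticity of $f\mot f^{-1}$ alone does not exclude it: the image of an infinite ray can perfectly well be a bounded set sitting at bounded distance from $m$ on the ``wrong'' side. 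One would need to locate $\mathrm{Fix}(f\mot f^{-1})$ relative to $m$ and control both rays, and it is not clear this can be done softly, without redoing the finer analysis of \cite{CasMac11}. (Your nonemptiness step is also misstated---$\phi(\tree)\subseteq T_+\cup e$ does not make $T_+\cup e$ invariant---but that part is repairable: $\phi(\tree)$ is an $\F$-invariant subtree, hence by minimality of $T$ is all of $T$, contradicting the containment.)
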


\section{Filling Words and Full Words}\label{sec:full}

Kapovich and Lustig \cite{KapLus10} define a non-trivial element $\mot\in\F$ to be
\emph{filling} if it has non-zero translation length for every very
small isometric action of $\F$ on an $\R$--tree.  (An action of $\F$ on an $\R$--tree is \emph{very small} if tripod stabilizers are trivial and arc stabilizers are maximal cyclic.)
Work of Guirardel \cite{Gui98} shows that $\mot$ is filling if and only
if $F$ does not split freely or cyclically relative to $\mot$.
Kapovich and Lustig ask for a combinatorial criterion that implies a
word is filling. 
The first such criterion was given by Solie \cite{Sol12}. 
We will give another.
Solie uses an exponentially generic set constructed by Kapovich,
Schupp, and Shpilrain \cite{MR2221020}.
Essentially, the set consists of words that are
 balanced, in the sense that every
element of $\basis^\pm$ occurs roughly the same number of times in
$\mot$ and every reduced two letter word in $\basis^\pm$ occurs as a subword of
$\mot$ roughly the same number of times (see \cite[Proof of
Theorem~A]{MR2221020}). Solie shows these words are filling. 
Our condition essentially says that a word is filling if it is
sufficiently complicated, in the sense that every reduced three letter
word in $\basis^\pm$ occurs as a subword of $\mot$.
Both Solie's condition and ours are satisfied on exponentially
generic sets in $\F$; ours is somewhat simpler to check.

If $\mot$ and $\mott$ are words in $\basis^\pm$, we say $\mot$
\emph{cyclically contains} $\mott$ if the free reduction of $\mott$
appears as a subword of the cyclic reduction of a power of $\mot$. 
We say a multiword $\multimot$ cyclically contains $\mott$ if one of
the words of $\multimot$ cyclically contains $\mott$.

\begin{definition}\label{def:full}
  A multiword $\multimot$ is \emph{full} if for every reduced
  word $\mott$ in $(\basis^\pm)^3$ either $\mott$ or $\inv{\mott}$ is cyclically
  contained in $\multimot$.
\end{definition}

\begin{lemma}\label{lem:fullisgeneric}
  The set of full words is exponentially generic in $\F$.
  The set of full, cyclically reduced words is exponentially generic in $\C$.
\end{lemma}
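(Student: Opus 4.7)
The plan is to reduce the statement to a finite intersection of exponentially generic sets coming from Propositions~\ref{prop:growthsensitive} and~\ref{prop:cyclicreductiongrowthsensitive}. Let $V$ denote the set of reduced words of length three in $\basis^\pm$; this is a finite set of size $2\therank(2\therank-1)^2$. For each $\mott \in V$, let $A_\mott \subset \F$ be the set of words whose cyclic reduction contains $\mott$ as a subword, and let $A_\mott^\C \subset \C$ be the set of cyclically reduced words containing $\mott$ as a subword.

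First I would observe that any $\mot \in A_\mott$ cyclically contains $\mott$ in the sense of the definition preceding~\ref{def:full}: if $\bar\mot$ denotes the cyclically reduced conjugate of $\mot$, then $\bar\mot$ is itself the cyclic reduction of the first power $\mot^1$, so $\mott$ appearing as a subword of $\bar\mot$ exhibits $\mott$ as a subword of the cyclic reduction of a power of $\mot$. The analogous observation holds for $A_\mott^\C$: for cyclically reduced $\mot$, containing $\mott$ as a subword trivially implies cyclically containing $\mott$.

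Next I would apply Proposition~\ref{prop:cyclicreductiongrowthsensitive} to each $\mott \in V$ to conclude that $A_\mott$ is exponentially generic in $\F$, and Proposition~\ref{prop:growthsensitive} to conclude that $A_\mott^\C$ is exponentially generic in $\C$. Since $V$ is finite and the intersection of finitely many exponentially generic sets is exponentially generic, the sets
\[
A = \bigcap_{\mott \in V} A_\mott \subset \F, \qquad A^\C = \bigcap_{\mott \in V} A_\mott^\C \subset \C
\]
are exponentially generic in $\F$ and $\C$ respectively. Any word in $A$ cyclically contains \emph{every} reduced three-letter word $\mott \in V$, and in particular satisfies the weaker requirement (from~\ref{def:full}) that at least one of $\mott$ or $\inv{\mott}$ is cyclically contained. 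Hence $A$ and $A^\C$ lie inside the set of full words, which is therefore exponentially generic.

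I do not foresee a substantive obstacle: the argument is essentially bookkeeping once one notices that fullness is a conjunction of finitely many "contains a fixed subword of the cyclic reduction" conditions. The only point requiring minor care is the passage from "subword of the cyclic reduction" to "cyclically contained", which amounts to taking the first power, as noted above.
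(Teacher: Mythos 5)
Your proof is correct and takes essentially the same approach as the paper: both reduce fullness to ``contains a fixed subword of the cyclic reduction'' and invoke \fullref{prop:growthsensitive} and \fullref{prop:cyclicreductiongrowthsensitive}. The only cosmetic difference is that the paper fixes a single reduced word containing every element of $(\basis^\pm)^3$ as a subword and applies each proposition once, whereas you apply them to each three-letter word separately and use the finite-intersection property of exponentially generic sets.
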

\begin{proof}
Let $B$ be either $\F$ or $\C$. 
Fix a reduced word $\mot$ containing every reduced word of $(\basis^\pm)^3$ as a 
subword.
The set $A$ of words in $B$ whose cyclic reduction contains $\mot$ is
exponentially generic in $B$, by \fullref{prop:growthsensitive} if
$B=\C$ or \fullref{prop:cyclicreductiongrowthsensitive} if $B=\F$.
\end{proof}

The rest of this subsection is devoted to establishing that $\F$ cannot split freely or cyclically relative to a full word.  
\begin{lemma}\label{lem:nofreesplitting}
  $\F$ does not split freely relative to a full word.
\end{lemma}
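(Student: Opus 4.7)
The plan is to apply \fullref{lem:freesplitting}: it will suffice to show that, for a full word $\mot$, the Whitehead graph $\Wh(*)$ based at a single vertex is connected and has no cut vertex. First I would invoke the remark following \fullref{def:whiteheadgraph} to identify $\Wh(*)$ with the classical Whitehead graph, whose vertices are labeled by $\basis^\pm$ and which has one edge joining $\inv{x}$ to $y$ for each cyclic occurrence of a reduced 2-letter subword $xy$ in $\mot$.

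Next I will show the stronger fact that $\Wh(*)$ contains the complete graph $K_{2\therank}$ on its vertex set. Given distinct vertices $u, v$ of $\Wh(*)$, set $s := \inv{u}$ and $t := v$. Since $u \neq v$, the word $st$ is reduced. Because $\therank \geq 2$ I can pick some $z \in \basis^\pm$ with $z \neq \inv{t}$, so that $stz$ is a reduced three-letter word. Fullness of $\mot$ then forces either $stz$ or its formal inverse $\inv{z}\inv{t}\inv{s}$ to be cyclically contained in $\mot$; in particular one of $st$ or $\inv{t}\inv{s}$ appears as a subword of $\mot^{\pm\infty}$, which in either case supplies the edge of $\Wh(*)$ between $\inv{s} = u$ and $t = v$.

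Finally, since $\therank \geq 2$ we have $2\therank \geq 4$, and $K_{2\therank}$ is $(2\therank-1)$-connected, hence connected with no cut vertex; these properties are inherited by the supergraph $\Wh(*)$, so \fullref{lem:freesplitting} applies. The only real content is the definitional translation between fullness and edges of the Whitehead graph; once that is unwound, the graph-theoretic conclusion is immediate and no substantial obstacle arises.
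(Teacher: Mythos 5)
Your proof is correct and follows exactly the paper's route: the paper also observes that fullness forces $\Wh(*)$ to contain the complete graph on $\basis^\pm$, hence connected with no cut vertex, and then invokes \fullref{lem:freesplitting}. The only difference is that you spell out the definitional unwinding (padding a reduced two-letter word to a reduced three-letter word and translating cyclic containment into Whitehead-graph edges) that the paper leaves implicit, which is done correctly.
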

\begin{proof}
  Let $\mot$ be a full word.
  Its Whitehead graph contains the complete graph, so it is connected
  without cut vertices. 
  By \fullref{lem:freesplitting}, $\F$ does not split freely relative to
$\mot$.  
\end{proof}

\begin{proposition}\label{prop:line}
  If $\gamma\subset \tree$ is a line and $\mot$ is full, then $\Wh(\gamma)$ is connected.
\end{proposition}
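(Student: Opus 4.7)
The plan is to establish connectivity by producing, for any two components of $\closure{\tree} \setminus \closure{\gamma}$, a chain of edges in $\Wh(\gamma)$ between them. I will label each component $C(v, a)$, where $v$ is a vertex of $\gamma$ and $a \in \basis^\pm$ is a \emph{transverse} direction at $v$ (meaning $a$ is not the label of either $\gamma$-edge incident to $v$); with this convention $C(v, a)$ is the component of $\closure{\tree} \setminus \closure{\gamma}$ containing the neighbor $v \cdot a$. Since $\F$ has rank at least two, each vertex of $\gamma$ admits at least one transverse direction.

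The key step is an ``adjacent-vertex lemma'': if $v$ and $v' = v \cdot b$ are adjacent vertices of $\gamma$, $a$ is transverse at $v$, and $a'$ is transverse at $v'$, then $C(v, a)$ and $C(v', a')$ are joined by an edge in $\Wh(\gamma)$. To prove it, I first verify that the three-letter word $a^{-1} b a'$ is reduced: $a \ne b$ by transversality of $a$ at $v$, and $a' \ne b^{-1}$ by transversality of $a'$ at $v'$ (since $b^{-1}$ is the $\gamma$-direction at $v'$ pointing toward $v$). By fullness, either $a^{-1} b a'$ or its inverse appears cyclically as a subword of $\mot^{\pm 1}$, so there is a line $\line \in \linesw$ passing through $v$ and $v'$ whose three successive labels along $v\cdot a \to v \to v' \to v' \cdot a'$ are $a^{-1}$, $b$, $a'$.

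The crucial observation is that $\line \cap \gamma = [v, v']$ exactly. In a tree, two geodesics meet in a connected subset, so $\line \cap \gamma$ is a subinterval of $\gamma$ containing $[v, v']$. At $v$, however, the axis $\line$ uses one $\gamma$-edge (in direction $b$) and one transverse edge (in direction $a$), so $v$ cannot lie in the interior of $\line \cap \gamma$; it must be an endpoint. The same argument applies at $v'$. Hence $\line \cap \gamma = [v, v']$, and the two rays of $\line$ outside this segment lie entirely in $C(v, a)$ and $C(v', a')$. Their endpoints at infinity therefore lie in these two components, yielding the required edge in $\Wh(\gamma)$.

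To finish, given arbitrary components $C(v_1, a_1)$ and $C(v_2, a_2)$, I choose a path $v_1 = w_0, w_1, \ldots, w_n = v_2$ of adjacent vertices on $\gamma$, inserting a detour through any neighbor if $v_1 = v_2$ so that $n \ge 1$. At each intermediate $w_i$ I pick any transverse direction $a_i'$, setting $a_0' = a_1$ and $a_n' = a_2$. Applying the adjacent-vertex lemma to each consecutive pair yields a chain of edges $C(w_0, a_0') - C(w_1, a_1') - \cdots - C(w_n, a_n')$ in $\Wh(\gamma)$, completing the proof. The main potential obstacle is the verification that $\line \cap \gamma = [v, v']$, which is handled by the ``connected intersection of two geodesics in a tree'' principle combined with the local ``one $\gamma$-edge plus one transverse edge'' picture at each endpoint.
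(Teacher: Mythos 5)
Your proof is correct and follows essentially the same route as the paper's: you index the components of $\closure{\tree}\setminus\closure{\gamma}$ by pairs (vertex of $\gamma$, transverse letter) --- exactly the paper's pairs $(n_C,s_C)$ --- and use fullness applied to the reduced three-letter word $a^{-1}ba'$ to realize an edge of $\Wh(\gamma)$ between components at adjacent vertices. The differences are cosmetic: the paper organizes this as a direct limit of finite subgraphs $W(K)$ over compact subsegments and also uses two-letter subwords to join components based at the same vertex, whereas you handle that case by a detour through an adjacent vertex of $\gamma$ and verify directly (via $\line\cap\gamma=[v,v']$) that the line's endpoints lie in the intended components --- a point the paper's write-up leaves implicit.
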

\begin{proof}
  For $K\subset \gamma$ compact, let $C^\pm$ be the components of
  $\tree\setminus K$ containing the rest of $\gamma$.  
Let $V_K$ be the vertex set of $\Wh(K)$.
  Define $W(K)$ to be the full subgraph of $\Wh(K)$ on vertices $V_K\setminus\{C^\pm\}$.
  
  Notice that for $K\subseteq K'$ we have $W(K)\subseteq W(K')$, and moreover $\Wh(\gamma) = \varinjlim W(K)$.  
  The following claim thus suffices to establish the proposition.
  \begin{claim}
    $W(K)$ is connected, for any nonempty compact subsegment $K\subset \gamma$.
  \end{claim}
  \begin{claimproof}
    We may suppose that $\gamma$ is parametrized to have unit speed, so that $\gamma$ sends integers to vertices of $\tree$.  
    For $z\in\bZ$, let $s_z\in \basis^\pm$ be the label of the edge $\gamma|_{[z,z+1]}$.

    The segment $K$ is equal to $\gamma|_{[p,q]}$ for some integers $p, q$.  
    Each vertex $C$ of $W(K)$ is a component of $\tree\setminus K$, and there is a unique edge $e_C$ of $\tree$ starting on $K$ and ending in $C$.  We define $n_C\in\bZ$, $s_C\in \basis^\pm$ so that the initial point of $e_C$ is $\gamma(n_C)$, and the label of $e_C$ is $s_C$.
    The pair $(n_C,s_C)$ completely determines $C$, so we can also
    refer to the vertices of $W(K)$ by these pairs.  
    Namely, $(n,s)\in \bZ\times \basis^\pm$ is a vertex of $W(K)$ if
    and only if:
\[ p\leq n \leq q\mbox{ and } s\notin\{\inv{s}_{n-1},s_n\}. \]
    Two vertices $(n,s)$ and $(n,t)$ of $W(K)$ are connected by an
    edge if and only if one of $\inv{s}t$ or $\inv{t}s$ is a subword of $w^{\infty}$.
    Since $\mot$ is full, $(n,s)$ and $(n,t)$ are indeed
    connected by an edge. 
    Two vertices $(n,s)$ and $(n+1,t)$ are connected if and only if
    $\inv{s} s_n t$ or its inverse occurs in some $\mot^\infty$.
    Again, since $\mot$ is full, all of these edges occur.  
    It follows that $W(K)$ is connected.
  \end{claimproof}
\end{proof}

\begin{corollary}\label{cor:fullisrigid}
  $\F$ does not split freely or cyclically relative to a full word.
\end{corollary}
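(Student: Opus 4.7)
The plan is to combine the two obstructions already established: Lemma \ref{lem:nofreesplitting} rules out free splittings, and the cyclic splitting case follows by contradiction using Proposition \ref{prop:line} and Lemma \ref{lem:cyclicsplitting}.

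More precisely, let $\mot$ be a full word and suppose, toward a contradiction, that $\F$ admits a cyclic splitting relative to $\mot$, i.e.\ a splitting over some infinite cyclic subgroup $\langle\mott\rangle$ in which $\mot$ is elliptic. By Lemma \ref{lem:cyclicsplitting} applied with $\multimot=\{\mot\}$, the Whitehead graph $\Wh([\inv{\mott}^\infty,\mott^\infty])$ must have more than one connected component. On the other hand, $\gamma:=[\inv{\mott}^\infty,\mott^\infty]$ is a bi-infinite geodesic, hence a line in $\tree$, so Proposition \ref{prop:line} applies and tells us that $\Wh(\gamma)$ is connected. This contradicts the conclusion of Lemma \ref{lem:cyclicsplitting}, so no such cyclic splitting exists.

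Combined with Lemma \ref{lem:nofreesplitting}, which already gives the non-existence of free splittings relative to $\mot$, this establishes that $\F$ admits no free or cyclic splitting relative to $\mot$, which is exactly the statement of the corollary.

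I do not anticipate any obstacle here: essentially all the work has been done in Proposition \ref{prop:line}, and the corollary is a short formal consequence. The only thing to be slightly careful about is to note that a cyclic splitting in the sense of Lemma \ref{lem:cyclicsplitting} is over a cyclic subgroup generated by some $\mott\in\F$, so that the axis $[\inv{\mott}^\infty,\mott^\infty]$ is indeed a line in $\tree$ and Proposition \ref{prop:line} applies to it.
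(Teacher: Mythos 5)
Your proposal is correct and follows exactly the paper's own argument: free splittings are excluded by Lemma~\ref{lem:nofreesplitting}, and a hypothetical cyclic splitting over $\langle\mott\rangle$ is refuted by playing Lemma~\ref{lem:cyclicsplitting} (which forces $\Wh([\inv{\mott}^\infty,\mott^\infty])$ to be disconnected) against Proposition~\ref{prop:line} (which says the Whitehead graph over any line is connected). Your closing observation that the relevant bi-infinite geodesic is indeed a line in $\tree$, so that Proposition~\ref{prop:line} applies, is the right point to check and is implicit in the paper's proof as well.
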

\begin{proof}
  Let $\mot$ be a full word.
  Lemma \ref{lem:nofreesplitting} tells us there is no free splitting.

  Suppose that $\F$ splits over $\langle v\rangle$ relative to
  $\mot$.  Then by \fullref{lem:cyclicsplitting} the
  Whitehead graph $\Wh([\bar{v}^\infty,v^\infty])$ has more than one connected component.  But this contradicts \fullref{prop:line}.
\end{proof}

In particular, fullness gives an easily verifiable, combinatorial
condition implying that a word is filling, giving another answer to
Kapovich and Lustig's question:
\begin{corollary}
  A full word in $\F$ is filling.
\end{corollary}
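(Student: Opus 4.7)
The proof I would give is essentially a one-line deduction from what has just been established. The key observation is that the authors have already done all the real work: Kapovich--Lustig's definition of \emph{filling} is translated into a combinatorial statement via Guirardel's theorem, which says that $\mot$ has nonzero translation length for every very small $\F$--action on an $\R$--tree if and only if $\F$ admits no free or cyclic splitting relative to $\mot$. This equivalence is recalled explicitly in the opening paragraph of \fullref{sec:full}.

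Given this, the plan is simply to invoke \fullref{cor:fullisrigid}, which states that $\F$ admits neither a free nor a cyclic splitting relative to a full word. Combining this with Guirardel's criterion immediately shows that a full word is filling. So the proof reduces to citing these two results in sequence.

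There is no real obstacle here, since the genuine content, namely ruling out free splittings via \fullref{lem:nofreesplitting} (which uses the Whitehead graph at a vertex being connected and without cut vertices) and ruling out cyclic splittings via \fullref{prop:line} (which shows $\Wh(\gamma)$ is connected for every line $\gamma$ in $\tree$), is contained in the preceding arguments. The only care required is to correctly quote Guirardel's theorem in the appropriate direction so that the absence of free and cyclic relative splittings indeed implies the translation length condition for every very small action, not just for those dual to splittings. This is standard, and once the equivalence is invoked, the corollary is immediate.
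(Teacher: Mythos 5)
Your proof is correct and is exactly the argument the paper intends: the corollary is stated without proof precisely because it follows immediately from \fullref{cor:fullisrigid} together with Guirardel's equivalence (filling if and only if no free or cyclic splitting relative to the word) quoted at the start of \fullref{sec:full}. Your note about quoting Guirardel's theorem in the right direction is the only point of care, and you handle it correctly.
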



\section{Virtual Geometricity is Rare}\label{sec:vgrare}
\begin{definition}
  For $\basic_i\in \basis$ and $\mot \in \F$, an \emph{$\basic_i$--syllable} is a maximal subword of $\mot$ equal to a power of $\basic_i$ or $\inv{\basic}_i$.  A subword is a \emph{syllable} if it is a $\basic_i$--syllable for some $i$.
\end{definition}

\begin{lemma}[{\cite[Lemma p. 18]{Berge}}]\label{lem:fewpowers}
  Let $\multimot$ be unramified, cyclically reduced, Whitehead minimal
  and either geometric or nonorientably geometric.
Suppose that $\basic_1$ and $\inv{\basic}_1$ do not form a separating pair of
vertices in $\Wh(*)$, and suppose that no $\mot\in\multimot$  begins and ends with distinct $\basic_1$--syllables.
Up to absolute value, at most three different powers of $\basic_1$ appear as syllables of elements of $\multimot$.
\end{lemma}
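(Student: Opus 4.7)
The plan is to realize $\multimot$ as an embedded multicurve on $\bdry H$, cut $H$ along a system of meridian disks $D_1,\ldots,D_\therank$ dual to $\basis$ to obtain a $3$-ball $B$, and analyze the resulting arc system $\alpha$ on $\bdry B$. Concretely, $\alpha$ is a disjoint collection of embedded arcs with endpoints on the $2\therank$ cut circles $\bdry D_i^\pm$; in the non-orientable case where a meridian is one-sided, the two ``sides'' of that meridian are naturally identified, but the local analysis near $D_1$ is the same.

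First I would observe that each $\basic_1^k$- or $\inv{\basic}_1^k$-syllable with $k\ge 2$ contributes exactly $k-1$ \emph{loop arcs} of $\alpha$ joining $\bdry D_1^+$ to $\bdry D_1^-$, and that these loop arcs are concentrically nested and hence mutually isotopic rel endpoints on $\bdry D_1^\pm$. So the loop arcs organize into isotopy classes, which I will call \emph{bands}, with every syllable contributing its arcs entirely to one band.

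The main step is to bound the number of bands by three. Because $\multimot$ is (possibly non-orientably) geometric, $\Wh(*)$ embeds in $\bdry B$ planarly in a way compatible with $\alpha$. The hypothesis that $\{\basic_1,\inv{\basic}_1\}$ is not a separating pair of $\Wh(*)$ says that $\Gamma:=\Wh(*)\setminus\{\basic_1,\inv{\basic}_1\}$ is connected; the bands between $\bdry D_1^+$ and $\bdry D_1^-$ then correspond bijectively to those faces of $\Gamma$ whose closures touch both $\bdry D_1^+$ and $\bdry D_1^-$, and a dual-graph/Euler-characteristic argument for connected planar graphs bounds the number of such common faces by three. Finally, the hypothesis that no $\mot\in\multimot$ begins and ends with distinct $\basic_1$-syllables lets me promote ``at most three bands'' to ``at most three syllable lengths'': if two different syllable lengths shared a band, the pattern of endpoints on $\bdry D_1^\pm$ would permit a Whitehead-type automorphism that strictly lowers total length, contradicting Whitehead minimality.

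The main obstacle will be making the band/face count rigorous: verifying that bands biject with common faces and that the numerical bound is exactly three (rather than something larger) for the right class of $2$-connected planar graphs. Handling the non-orientable case is an additional technicality since $D_1$ may be one-sided and cutting produces a single circle rather than two; the natural workaround is to pass to the orientable double cover of $H$, prove the bound there, and descend the count via the covering involution.
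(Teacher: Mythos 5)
Your central counting step has a concrete flaw. The graph $\Gamma=\Wh(*)\setminus\{\basic_1,\inv{\basic}_1\}$ omits exactly the edges of $\Wh(*)$ incident to $\basic_1$ and $\inv{\basic}_1$ --- that is, the strands of the embedded multicurve entering and leaving the first handle --- and those strands are part of the arc system, so they constrain which loop arcs are parallel. With them omitted, your face count degenerates: a connected graph $\Gamma$ embedded in the annulus between $\bdry D_1^+$ and $\bdry D_1^-$ either contains a cycle separating the two circles (and then \emph{no} face touches both, and no loop arcs exist) or lies in a disk (and then \emph{exactly one} face touches both). So ``at most three common faces'' is really ``at most one,'' and the claimed bijection between bands and common faces cannot hold, since several mutually non-parallel bands can live in one face, separated along $\bdry D_1^\pm$ by the interleaved endpoints of the strands you discarded. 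More fundamentally, the number three is not visible in the cut-open sphere at all: it is the bound $-3\chi=3$ on disjoint, pairwise non-parallel essential arcs in the \emph{once-punctured torus} (or punctured Klein bottle, which has the same Euler characteristic) obtained by regluing $\bdry D_1^+$ to $\bdry D_1^-$; the genus created by the regluing is what raises the count from one to three. That is the route of the proof the paper cites: by Zieschang's theorem each $\basic_1$-syllable yields an essential arc crossing the handle, distinct absolute exponents yield non-parallel arcs, and the Euler characteristic argument caps the number of parallelism classes at three.

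There are two further gaps. First, syllables $\basic_1^{\pm 1}$ contribute no loop arcs in your scheme, so even if your band bound were correct you would only conclude that at most three exponents of absolute value at least $2$ occur --- i.e.\ at most \emph{four} distinct powers counting $1$ --- which is strictly weaker than the lemma and fatal for the paper's application, where the poison word has syllables $\basic_1^{1},\basic_1^{2},\basic_1^{3},\basic_1^{4}$. In the punctured-torus picture an exponent-one syllable gives an essential arc like any other and enters the count of three. Second, your ``concentric nesting'' normal form is precisely where the hypotheses must be spent: it is Zieschang's theorem, which uses Whitehead minimality and the non-separating-pair condition, that puts the embedded multicurve in a position where syllables appear as standard winding arcs; you assert this for free and invoke Whitehead minimality only in a vague final ``promotion'' step, while the hypothesis that no word begins and ends with distinct $\basic_1$-syllables (needed so that the cyclic syllable count is well defined) is never used. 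Your non-orientable workaround via the orientable double cover is also not straightforward --- elevations change the words, the exponents, and Whitehead minimality --- whereas, as the paper remarks, Berge's and Zieschang's arguments apply verbatim to non-orientable handlebodies, the punctured Klein bottle again having $\chi=-1$.
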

\begin{proof}[Proof idea]
  Using a theorem of Zieschang \cite[Theorem p. 11]{Berge} one can show that if there were four we would be able to find four non-intersecting parallelism classes of properly embedded arcs in a punctured torus or punctured Klein bottle.  An Euler characteristic argument shows that there are at most three such classes.  
\end{proof}
\begin{remark}
  Berge's \cite{Berge} results are stated for orientable handlebodies, but the
  proofs of \cite[Lemma p. 18]{Berge} and Zieschang's theorem
  \cite[Theorem p. 11]{Berge} do not require orientability.
\end{remark}

\begin{theorem}\label{main}
Let $B$ be either $\F$
or $\C$.
Let $\mathrm{VG}(l)$ be the probability that that a word chosen
randomly with uniform probability from the set of words of $B$ of
length at most $l$ is virtually geometric.
There exist $a>0$ and $b\in\mathbb{R}$ such that $\mathrm{VG}(l)\leq\exp(b-al)$ for all sufficiently large $l$.
\end{theorem}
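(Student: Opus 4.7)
The plan is to invoke \fullref{corollary:poison} by exhibiting a single word $v\in\F$ that is exponentially generically poison to virtual geometricity in $B$. I take $v=v_1 v_2$, where $v_1$ blocks every free and cyclic splitting of $\F$ relative to any word whose cyclic reduction contains $v$, and $v_2$ forces enough distinct $\basic_1$-syllables to violate \fullref{lem:fewpowers}. The characterization of virtual geometricity from \cite{Cas10splitting} reduces virtual geometricity to (possibly non-orientable) geometricity in the absence of relative cyclic splittings, so together these two features will preclude virtual geometricity.

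For $v_1$ I take any freely reduced word containing every freely reduced length-three word in $(\basis^\pm)^3$ as a subword; a de Bruijn-type word plainly suffices. Any word whose cyclic reduction contains $v_1$ is then full in the sense of \fullref{def:full}, and \fullref{cor:fullisrigid} yields the desired absence of relative splittings. For $v_2$ I take
\[ v_2 \;=\; \basic_2\basic_1\basic_2\basic_1^2\basic_2\basic_1^3\basic_2\basic_1^4\basic_2, \]
so that the interleaved $\basic_2$'s isolate four distinct positive $\basic_1$-syllables. I adjust the last letter of $v_1$ if necessary so that $v_1 v_2$ is freely reduced.

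Now let $w$ be cyclically reduced, not a proper power, and Whitehead minimal, and suppose the cyclic reduction of $w$ contains $v$. Fullness forces the classical Whitehead graph of $w$ to contain the complete graph on $\basis^\pm$, so $\{\basic_1,\inv{\basic}_1\}$ is not a separating pair of vertices. Since $v_2 \subset w$, the cyclic word $w$ contains an $\basic_2$, and after a cyclic rotation I may choose the linear representative of $w$ to begin with $\basic_2$; the ``beginning and ending'' hypothesis of \fullref{lem:fewpowers} then holds vacuously. If $w$ were orientably or non-orientably geometric, \fullref{lem:fewpowers} would limit the number of distinct $\basic_1$-syllables up to sign to at most three, contradicting the four positive $\basic_1$-syllables visible inside $v_2$. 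With relative splittings already ruled out, \cite{Cas10splitting} upgrades this to non-virtual-geometricity of $w$.

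Finally, the set of Whitehead minimal, non-proper-power, cyclically reduced words in $B$ is exponentially generic by \fullref{corollary:genericmultiword}, so the preceding paragraph says that $v$ is exponentially generically poison to virtual geometricity in $B$, and \fullref{corollary:poison} finishes the proof. The main substantive step is the appeal to \cite{Cas10splitting}: the whole strategy only works because, once $v_1$ rules out relative splittings, virtual geometricity collapses to orientable or non-orientable geometricity, which is what makes the Berge--Zieschang syllable obstruction of \fullref{lem:fewpowers} decisive.
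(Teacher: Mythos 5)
Your proposal is correct and takes essentially the same approach as the paper's proof: the identical poison-word strategy combining a full word (giving \fullref{cor:fullisrigid} and the reduction to geometricity via \cite{Cas10splitting}) with a block exhibiting four distinct $\basic_1$--syllables (violating \fullref{lem:fewpowers}), applied on the exponentially generic set of \fullref{corollary:genericmultiword} and concluded by \fullref{corollary:poison}. Your two small refinements---flanking $v_2$ with $\basic_2$ on both sides so that the four $\basic_1$--syllables are visibly maximal in any ambient word, and cyclically rotating $w$ to begin with $\basic_2$ so the ``begins and ends with distinct $\basic_1$--syllables'' hypothesis of \fullref{lem:fewpowers} holds vacuously---address details the paper leaves implicit.
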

\begin{proof}
Let $\mott'$ be a reduced word with first letter $\basic_1$
and last letter $\basic_2$ that contains every word
in $(\basis^\pm)^3$.
Let $\mott=\basic_1^1\basic_2\basic_1^2\basic_2\basic_1^3\basic_2\basic_1^4\basic_2\mott'$.

Let $C$ be the subset of $B$ consisting
of words that are not proper powers and whose cyclic reduction is Whitehead
minimal. 
$C$ is exponentially generic in $B$ by \fullref{corollary:genericmultiword}.

Let $\mot'$ be a word in $C$ whose cyclic reduction $\mot$
contains
$\mott$ as a subword.
Then $\mot$ is full, so, by \fullref{cor:fullisrigid}, $\F$ does
not admit free or cyclic splittings relative to $\mot$.
In particular, the JSJ decomposition of $\F$ relative
  to $\mot$ is trivial, so \cite{Cas10splitting} implies $\mot$ is virtually geometric if
and only if it is either orientably or non-orientably geometric.
However, $\Wh(*)$ contains the complete subgraph on its
vertices, so it has no separating pairs of vertices.
By \fullref{lem:fewpowers}, $\mot$ can not be geometric or
non-orientably geometric, since at least four distinct
powers of $\basic_1$ appear as syllables of $\mot$.
Therefore, $\mot$ and $\mot'$ are not virtually geometric.

We have shown that $\mott$ is exponentially generically poison to
virtual geometricity in $B$, so
the theorem follows from \fullref{corollary:poison}.
\end{proof}

\fullref{main} is stated for single words.
For multiwords, there are different models of genericity, but
the presence of a single non-virtually geometric
word in a multiword implies that the multiword is non-virtually
geometric. 
Thus, virtual geometricity will be rare for any 
model in which a random multiword contains a long random word.
We state corollaries for two popular models.

\begin{corollary}[{`Few Relators Model' of \cite{MR1445193}}]\label{corollary:fewrelators}
  Let $B$ be either $\F$
or $\C$. 
For any $k\geq 1$, let $\mathrm{VG}(l)$ be the probability that a multiword consisting of
  $k$ randomly chosen words in $B$ of length at most $l$, selected
  independently and uniformly, is virtually
  geometric.
 There exist $a>0$ and $b\in\mathbb{R}$ such that 
$\mathrm{VG}(l)\leq\exp(b-al)$ for all sufficiently
 large $l$.
\end{corollary}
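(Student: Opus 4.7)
The plan is to reduce this corollary directly to \fullref{main} by exploiting the monotonicity of (virtual) geometricity under passage to sub-multiwords.

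First I would record the key monotonicity principle: if a multiword $\multimot = \{\mot_1, \dots, \mot_k\}$ is virtually geometric, witnessed by a finite index subgroup $F\leq\F$ for which $\multicon_F$ is realized by an embedded multicurve in $\bdry\tilde H$, then by discarding all components except those corresponding to lifts of a single $\mot_i$ one obtains an embedded multicurve realizing $[\mot_i]_F$. Thus each individual $\mot_i$ is virtually geometric (with the same witnessing $F$). Equivalently, if even one $\mot_i$ fails to be virtually geometric, then $\multimot$ is not virtually geometric.

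Next I would apply \fullref{main} to a single coordinate. Let $a>0$ and $b\in\R$ be the constants from \fullref{main}, so that a word drawn uniformly from words of length at most $l$ in $B$ is virtually geometric with probability at most $\exp(b-al)$ for $l$ sufficiently large. By the monotonicity above,
\[
\mathrm{VG}(l) \;\leq\; \Pr(\mot_1 \text{ is virtually geometric}) \;\leq\; \exp(b-al),
\]
which is exactly the bound claimed. (Since the $\mot_i$ are chosen independently, the probability that \emph{all} of them are virtually geometric is actually at most $\exp(k(b-al))$, giving a stronger decay constant $ka$, but the stated bound is already enough.)

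There is no real obstacle here: the corollary is essentially an immediate consequence of \fullref{main} together with the observation, already highlighted in the paragraph preceding the statement, that virtual geometricity is inherited by sub-multiwords. The only point that requires a sentence of justification is that monotonicity claim, and it is built into the definition of (virtual) geometricity via the fact that the embedded multicurve restricts to an embedded multicurve on any subset of its components.
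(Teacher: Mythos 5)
Your proposal is correct and matches the paper's (implicit) argument exactly: the paragraph preceding the corollary observes that a multiword containing a single non-virtually geometric word is itself non-virtually geometric, and then the bound follows by applying \fullref{main} to one coordinate, just as you do. Your extra justification of the monotonicity via restricting the embedded multicurve to the components lifting a single $\mot_i$ is a fine way to make explicit what the paper leaves as an observation.
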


\begin{corollary}[{`Density Model' of \cite{Gro93}}]\label{corollary:density}
Let $B$ be either $\F$
or $\C$.
  For any density $0\leq d\leq 1$, let $\mathrm{VG}(l)$ be the probability that a multiword consisting of
  $(2\cdot\mathrm{rank}(\F)-1)^{dl}$ randomly chosen words in $B$ of length at most $l$, selected
  independently and uniformly, is virtually
  geometric. 
There exist $a>0$ and $b\in\mathbb{R}$ such that 
$\mathrm{VG}(l)\leq\exp(b-al)$ for all sufficiently
 large $l$.
\end{corollary}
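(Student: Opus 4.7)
The plan is to reduce \fullref{corollary:density} to the single-word case, \fullref{main}, by a straightforward monotonicity argument. As noted just before the corollary, a multiword that contains a non-virtually-geometric word is itself non-virtually geometric; equivalently, if $\multimot$ is virtually geometric then every $\mot \in \multimot$ is virtually geometric on its own as a singleton multiword.

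I will denote by $p(l)$ the probability that a single word, chosen uniformly from the set of length-at-most-$l$ elements of $B$, is virtually geometric, and by $N(l) = \lceil (2\cdot\mathrm{rank}(\F)-1)^{dl}\rceil \geq 1$ the number of words forming the multiword. By \fullref{main} there exist $a > 0$ and $b \in \R$ with $p(l) \leq \exp(b - al)$ for all sufficiently large $l$. Since the $N(l)$ words of the multiword are chosen independently and uniformly, the event ``every one of them is individually virtually geometric'' has probability $p(l)^{N(l)}$, and by the monotonicity observation above this upper-bounds the probability that the full multiword is virtually geometric:
\[ \mathrm{VG}(l) \;\leq\; p(l)^{N(l)} \;\leq\; p(l) \;\leq\; \exp(b - al), \]
where the middle inequality uses $N(l) \geq 1$ and $p(l) \leq 1$.

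I do not expect any substantive obstacle: the argument is a soft probabilistic consequence of the single-word theorem together with the downward closure of virtual geometricity under taking sub-multiwords. The only point that deserves a sentence of justification is that downward closure: for any finite-index $F < \F$ in which $\multimot_F$ is realised by an embedded multicurve on $\bdry H_F$, the sub-collection $[\mot]_F$ is automatically embedded as well, so $\{\mot\}$ is virtually geometric whenever $\multimot$ is. Once this is in place, the displayed inequality is in fact wasteful for $d > 0$ (where $N(l)$ grows exponentially and $p(l)^{N(l)}$ decays super-exponentially); the corollary's claim of simple exponential decay is just the natural common form that also handles the endpoint $d = 0$, where $N(l) = 1$ and the bound reduces to \fullref{main} itself.
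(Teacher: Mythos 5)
Your proposal is correct and matches the paper's own (implicit) argument: the paper proves the corollary exactly by the observation stated just before it, that a multiword containing a non-virtually-geometric word is non-virtually geometric, so $\mathrm{VG}(l)$ is bounded by the probability that a single uniformly chosen word is virtually geometric, which decays as in \fullref{main}. Your extra refinement $\mathrm{VG}(l)\leq p(l)^{N(l)}$ is harmless but unnecessary, as you note yourself.
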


Finally, we extract from the proof of \fullref{main} a non-virtual
geometricity criterion:
\begin{corollary}
  Let $\multimot$ be an unramified, Whitehead minimal, cyclically
  reduced multiword. If $\multimot$ is full and at least four distinct powers (up to absolute value) of some basis element occur as syllables of elements of $\multimot$, then
  $\multimot$ is not virtually geometric.
\end{corollary}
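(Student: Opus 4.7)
The plan is to extract the argument used at the end of the proof of \fullref{main}: that argument derives non-virtual geometricity from exactly the four hypotheses assumed here --- unramified, Whitehead minimal, cyclically reduced, and full, together with the presence of four distinct syllable powers of some basis element --- rather than from the specific poisoned word $\mott$ constructed there. So the corollary really is just a repackaging of the last few lines of that proof.

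First, I would establish that $\F$ admits neither a free nor a cyclic splitting relative to $\multimot$. Fullness forces $\Wh(*)$ to contain the complete graph on $\basis^\pm$, so it is connected and has no cut vertices, and \fullref{lem:freesplitting} rules out a relative free splitting. The proof of \fullref{prop:line} uses only that $\multimot$ is full, so it extends verbatim to multiwords and shows that $\Wh(\gamma)$ is connected for every line $\gamma\subset\tree$; combined with \fullref{lem:cyclicsplitting}, this rules out a relative cyclic splitting. (This is exactly the multiword analog of \fullref{cor:fullisrigid}.) Since the JSJ decomposition of $\F$ relative to $\multimot$ is therefore trivial, \cite{Cas10splitting} tells us that $\multimot$ is virtually geometric if and only if it is either orientably or non-orientably geometric.

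To rule out these last two possibilities, I would apply \fullref{lem:fewpowers} with the basis element $\basic_i$ whose four distinct syllable powers are assumed to occur. Completeness of $\Wh(*)$ immediately ensures that $\{\basic_i,\inv{\basic}_i\}$ is not a separating pair of vertices. The remaining hypothesis of \fullref{lem:fewpowers} --- that no element of $\multimot$ begins and ends with distinct $\basic_i$-syllables --- can be arranged by replacing each $\mot\in\multimot$ by a suitable cyclic conjugate, an operation that leaves the conjugacy classes, the Whitehead data, fullness, and the multiset of syllables all unchanged. Under these conditions \fullref{lem:fewpowers} forces at most three distinct powers of $\basic_i$ to appear as syllables whenever $\multimot$ is geometric or non-orientably geometric, contradicting the hypothesis. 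Hence $\multimot$ is not virtually geometric. The step requiring the most care --- and the one I expect to be the main obstacle --- is this cyclic-conjugation argument: one must verify that representatives can be chosen so that Berge's syllable hypothesis holds simultaneously for the chosen $\basic_i$, without disturbing any of the standing hypotheses of the corollary.
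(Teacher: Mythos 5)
Your proposal is correct and follows essentially the same route the paper intends: the corollary carries no separate proof (the paper says it is ``extracted from the proof of \fullref{main}''), and your reconstruction --- the multiword versions of \fullref{prop:line} and \fullref{cor:fullisrigid}, triviality of the relative JSJ so that \cite{Cas10splitting} reduces virtual geometricity to (non-)orientable geometricity, and then \fullref{lem:fewpowers} with completeness of $\Wh(*)$ ruling out separating pairs --- is precisely that extraction. Your cyclic-conjugation step to arrange Berge's begin/end syllable hypothesis (with the four distinct powers read cyclically, so rotating each word to start at a syllable boundary preserves all standing hypotheses) correctly supplies a detail the paper leaves implicit even in the proof of \fullref{main}.
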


\label{sec:main}
\section{Experimental Estimates}\label{sec:experiment}
We wrote some computer scripts \cite{CasMan13} to determine if a given
multiword is virtually geometric or not.
The underlying theory was developed in \cite{Man10,
 CasMac11, Cas10splitting}.
We used these computer scripts to run computer experiments testing random
words for virtual geometricity. The results are presented in this section.
Let us first give a brief account of how the scripts work.

Given a multiword $\multimot$ and a word $\mott$, there is a way to
determine if $\F$ splits over $\langle \mott\rangle$ relative to
$\multimot$ by considering connectivity of certain finite generalized
Whitehead graphs. (We improve \fullref{lem:cyclicsplitting} to only
consider a bounded subsegment of $[\mott^{-\infty},\mott^\infty]$.)

More specifically, we implement the algorithm of \cite[Theorem~4.17]{CasMac11} to search
for splitting words. 
There are two ideas behind this search algorithm. 
First, there is a bound, depending on $\multimot$, of the maximal
length of a cyclically reduced word $\mott$ such that the splitting of $\F$ over $\langle
\mott\rangle$ relative to $\multimot$ is universal, ie, such that
every other splitting word is elliptic with respect to this
splitting. 

Second, for a cyclically reduced splitting word, the generalized
Whitehead graph over every prefix $p$ has a cut pair of a particular type, and this cut pair gives directions for finding the next letter of the splitting word.
Thus, we can search inductively starting with short words $\mott$ and checking
their generalized Whitehead graphs for these special cut pairs.  If we find one, take the extensions $p\basic$ for $\basic\in \basis^\pm$ suggested by the cut pairs.

The worst case estimates for the length of such a search are
horrendous, but on generic multiwords it can be done effectively in
low rank, because we quickly see that most short words can not be a
prefix of a splitting word. 

If $\F$ splits over $\langle \mott\rangle$, we deduce whether a second word $\mott'$ is hyperbolic or
elliptic in the splitting over $\langle \mott\rangle$ by adding edges
corresponding to $\mott'$ to the Whitehead graph over $\mott$ and
checking if the number of connected components stays the same or decreases.
We do this for all pairs of words found by the search algorithm to
find the universal splitting words. 

Given the list of conjugacy classes of universal splitting words, we compute the JSJ
decomposition of $\F$ relative to $\multimot$, again using
combinatorics of generalized Whitehead graphs. 
The main result of \cite{Cas10splitting} says that $\multimot$ is
virtually geometric if and only if the induced multiword in each
vertex of the relative JSJ decomposition is orientably or
non-orientably geometric. 
The induced multiword in a vertex group consists of conjugates of elements of
$\multimot$ contained in that vertex group plus the image of a
generator of each incident edge group.

The induced multiwords for quadratically hanging vertices of the
relative JSJ decomposition are always geometric, so virtual
geometricity of $\multimot$ is reduced to checking geometricity of
the induced multiword in each rigid vertex of the relative JSJ
decomposition. 
For this we use Berge's program \texttt{heegaard} \cite{Berge}.
Berge shows that geometricity is equivalent to the existence of a
planar embedding of the Whitehead graph of the multiword that
satisfies an additional consistency requirement.
The fact that we start with rigid vertices implies that such a
Whitehead graph is 3--connected, so there is a unique planar
embedding, if one exists. 
\texttt{heegaard} checks whether there exists a planar embedding, and,
if so, finds it and checks the consistency conditions. 

There is a slight complication that \texttt{heegaard} only checks the
consistency conditions for orientable geometricity.
To work around this issue, if \texttt{heegaard} says an induced
multiword in a rigid vertex group is not geometric, we also check the
lifts to all index 2 subgroups. 

In the figures below we present findings of our computer
experiments\footnote{IPython \cite{ipython} was used in the development
  of the computer scripts and in running the experiments. Scripts for testing virtual geometricity and reproducing our experiments can be found at \url{https://bitbucket.org/christopher_cashen/virtuallygeometric}.} 
on the proportions of random words which are geometric, virtually
geometric, and not full in ranks 2, 3, and 4. 
Experiments on geometricity and virtual geometricity were performed before we found the proof of \fullref{main}; the experiments on fullness were inspired by the proof.

We see in \fullref{fig:gvgfull} that while
the proportion of not full words provides an exponentially decaying
upper bound for the proportion of virtually geometric words, it is not
 very sharp.

Fit curves are computed for each data series by taking the subseries
that comes after the first word length for which the proportion of
words falls below 50\%, taking logarithms, computing a best fit line
by weighted least squares approximation,
and then exponentiating.

\begin{figure}[h!]
  \centering
  \includegraphics[width=\textwidth]{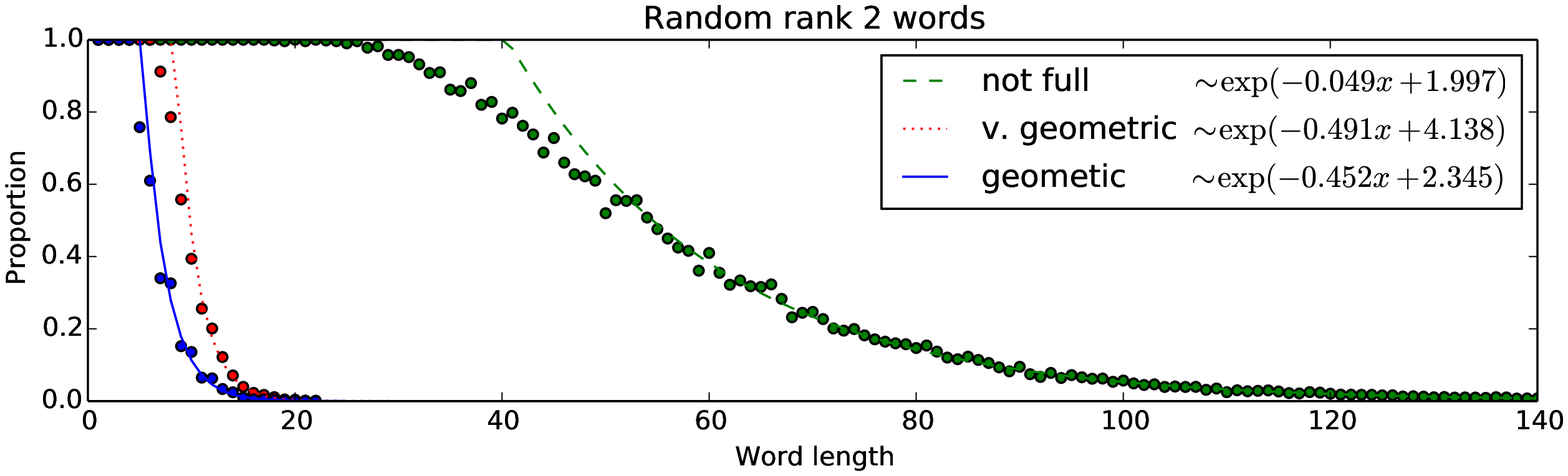}
  \includegraphics[width=\textwidth]{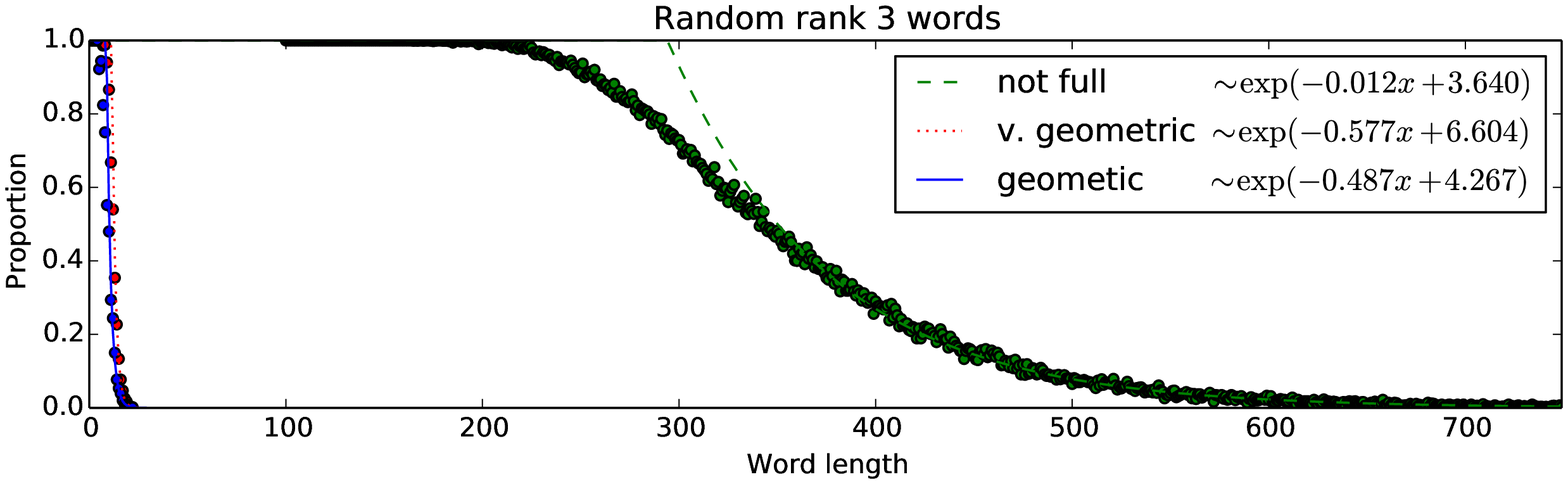}
  \includegraphics[width=\textwidth]{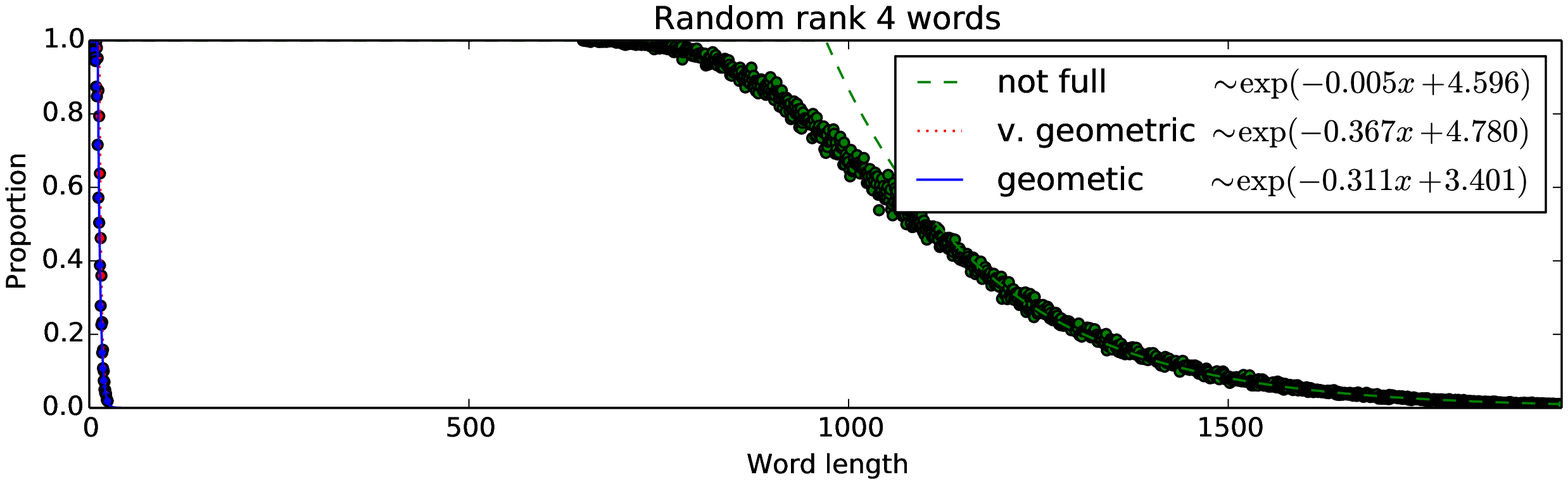}
  \caption{Geometricity, Virtual Geometricity, and Full Words}
  \label{fig:gvgfull}
\end{figure}

\fullref{fig:loggvg} plots logarithm(proportion $\pm$ standard error) and omits the full words data.
The number of trials increases with the word length so that the quantity
\[\log\left(\frac{\text{proportion} + \text{standard error}}{\text{proportion} - \text{standard
error}}\right)\] 
stays small.
Generating graphs with this amount of precision took about two months
of continuous running time on a circa 2010 dual core desktop computer.

\begin{figure}[h!]
  \centering
  \includegraphics[width=\textwidth]{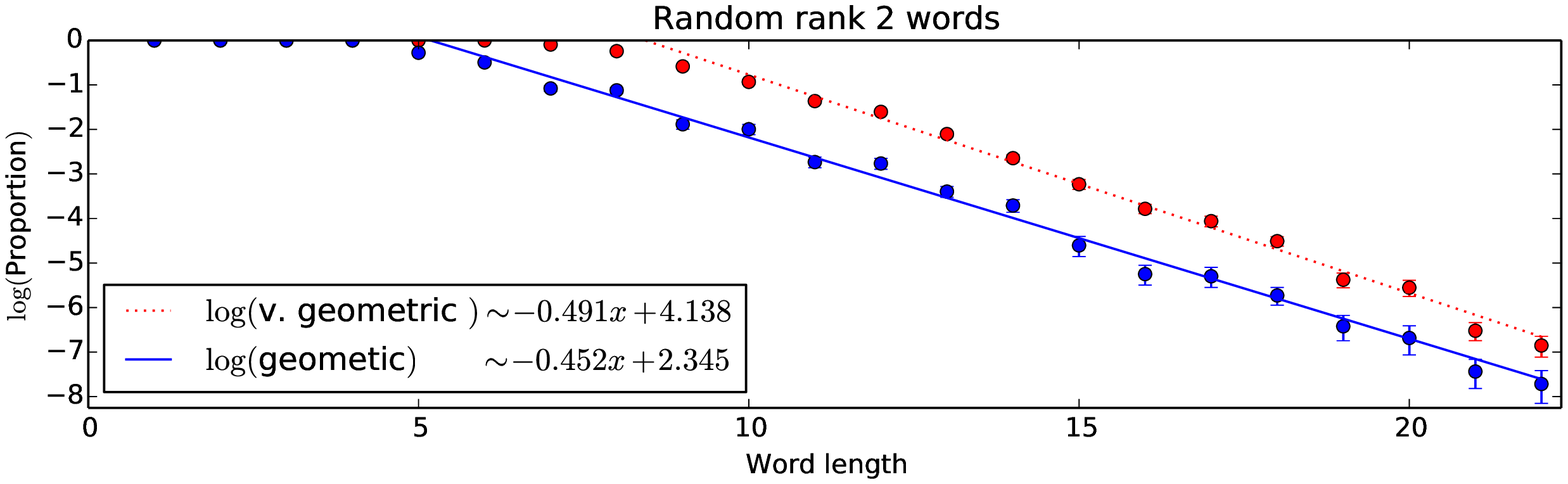}
  \includegraphics[width=\textwidth]{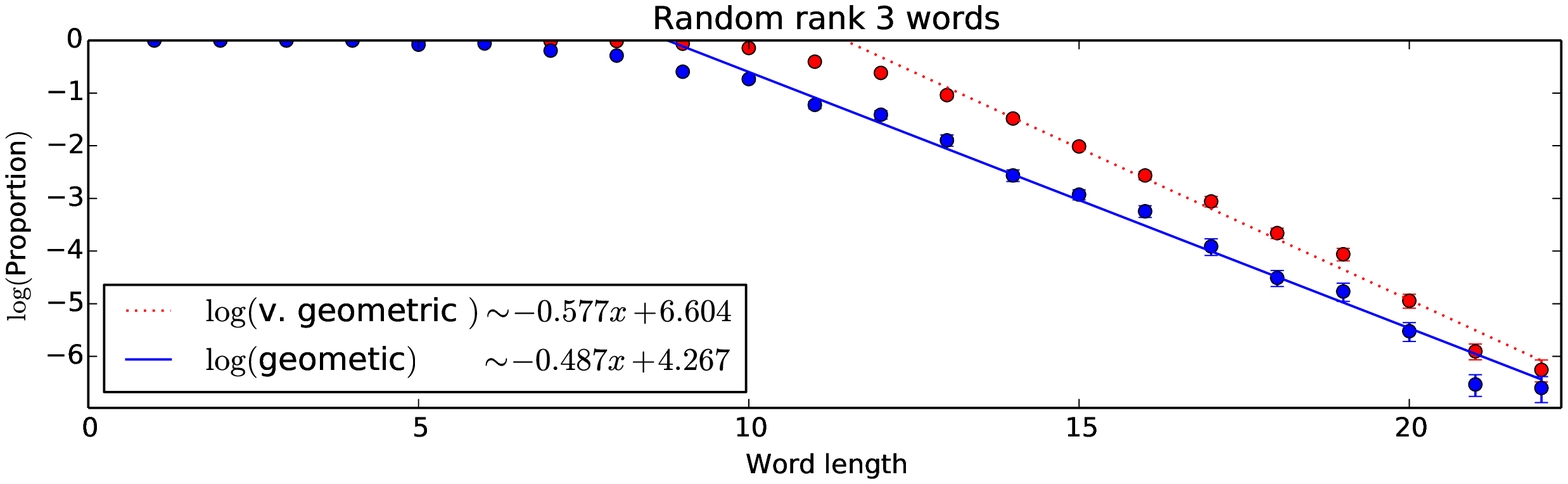}
  \includegraphics[width=\textwidth]{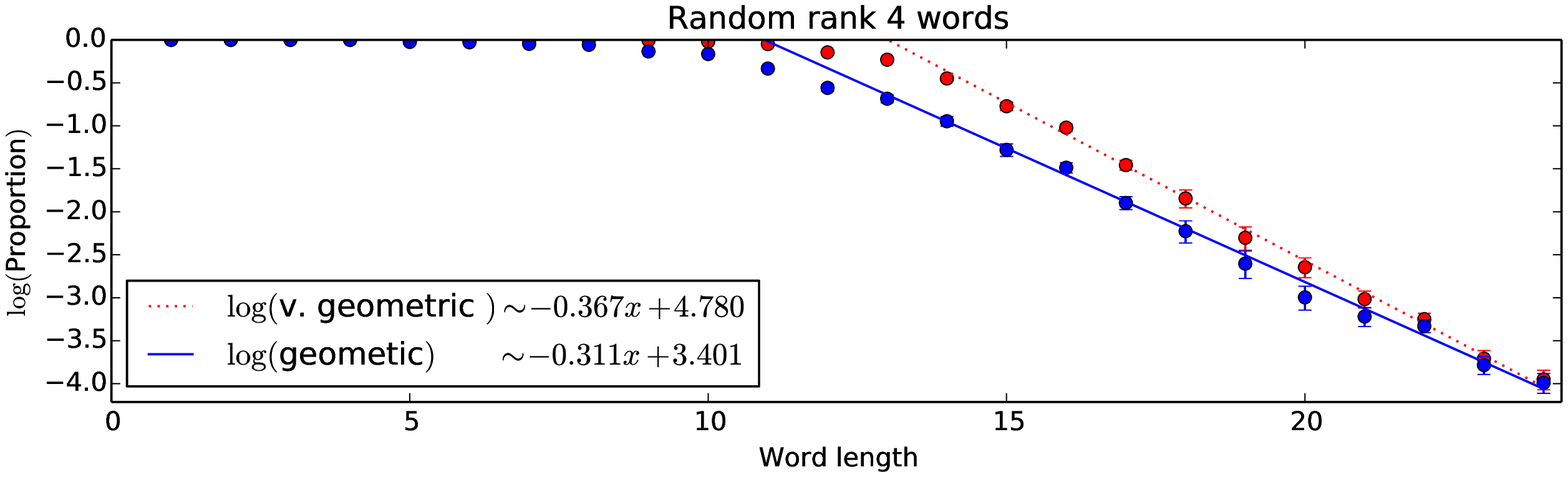}
  \caption{Geometricity and Virtual Geometricity}
  \label{fig:loggvg}
\end{figure}



\bibliographystyle{hyperamsplain}
\bibliography{vg}

\providecommand{\bysame}{\leavevmode\hbox to3em{\hrulefill}\thinspace}
\providecommand{\MR}{\relax\ifhmode\unskip\space\fi MR }
\providecommand{\MRhref}[2]{%
  \href{http://www.ams.org/mathscinet-getitem?mr=#1}{#2}
}
\providecommand{\href}[2]{#2}
\providecommand{\doi}[1]{doi: #1}
\begin{thebibliography}{10}

\bibitem{MR1445193}
Goulnara~N. Arzhantseva and Alexander~Yu. Ol{\cprime}shanski{\u\i},
  \emph{Generality of the class of groups in which subgroups with a lesser
  number of generators are free}, Mat. Zametki \textbf{59} (1996), no.~4,
  489--496, 638, translated in Math. Notes 59 (1996), no. 3-4, 350--355,
  \href{http://dx.doi.org/10.1007/BF02308683}{{\texttt{\textsc{doi}:\detokenize{10.1007/BF02308683}}}}.
  \href{http://www.ams.org/mathscinet-getitem?mr=1445193}{MR~1445193
  (98k:20040)}

\bibitem{Berge}
John Berge, \emph{Documentation for the program {H}eegaard}, circa 1990,
  available at \url{http://www.math.uic.edu/~t3m}.

\bibitem{Cas10splitting}
Christopher~H. Cashen, \emph{Splitting line patterns in free groups}, preprint,
  2010, \href{http://arXiv.org/abs/1009.2492v4}{{\texttt{arXiv:1009.2492v4}}}.

\bibitem{CasMac11}
Christopher~H. Cashen and Nata{\v{s}}a Macura, \emph{Line patterns in free
  groups}, Geom. Topol. \textbf{15} (2011), no.~3, 1419--1475,
  \href{http://dx.doi.org/10.2140/gt.2011.15.1419}{{\texttt{\textsc{doi}:\detokenize{10.2140/gt.2011.15.1419}}}}.
  \href{http://www.ams.org/mathscinet-getitem?mr=2825316}{MR~2825316}

\bibitem{CasMan13}
Christopher~H. Cashen and Jason~F. Manning, \emph{virtuallygeometric}, 2014,
  computer program, available at
  \url{https://bitbucket.org/christopher_cashen/virtuallygeometric},
  \href{https://bitbucket.org/christopher_cashen/virtuallygeometric}{{\texttt{\detokenize{https://bitbucket.org/christopher_cashen/virtuallygeometric}}}}.

\bibitem{GorWil09}
Cameron Gordon and Henry Wilton, \emph{On surface subgroups of doubles of free
  groups}, preprint, 2009,
  \href{http://arXiv.org/abs/0902.3693v1}{{\texttt{arXiv:0902.3693v1}}}.

\bibitem{GorWil10}
Cameron Gordon and Henry Wilton, \emph{On surface subgroups of doubles of free
  groups}, J. Lond. Math. Soc. (2) \textbf{82} (2010), no.~1, 17--31,
  \href{http://dx.doi.org/10.1112/jlms/jdq007}{{\texttt{\textsc{doi}:\detokenize{10.1112/jlms/jdq007}}}}.
  \href{http://www.ams.org/mathscinet-getitem?mr=2669638}{MR~2669638
  (2011k:20085)}

\bibitem{GriDeL97}
Rostislav Grigorchuk and Pierre de~la Harpe, \emph{On problems related to
  growth, entropy, and spectrum in group theory}, J. Dynam. Control Systems
  \textbf{3} (1997), no.~1, 51--89,
  \href{http://dx.doi.org/10.1007/BF02471762}{{\texttt{\textsc{doi}:\detokenize{10.1007/BF02471762}}}}.
  \href{http://www.ams.org/mathscinet-getitem?mr=1436550}{MR~1436550
  (98d:20039)}

\bibitem{Gro93}
M.~Gromov, \emph{Asymptotic invariants of infinite groups}, Geometric group
  theory, {V}ol.\ 2 ({S}ussex, 1991), London Math. Soc. Lecture Note Ser., vol.
  182, Cambridge Univ. Press, Cambridge, 1993, pp.~1--295.
  \href{http://www.ams.org/mathscinet-getitem?mr=1253544}{MR~1253544
  (95m:20041)}

\bibitem{Gui98}
Vincent Guirardel, \emph{Approximations of stable actions on {${\bf
  R}$}-trees}, Comment. Math. Helv. \textbf{73} (1998), no.~1, 89--121,
  \href{http://dx.doi.org/10.1007/s000140050047}{{\texttt{\textsc{doi}:\detokenize{10.1007/s000140050047}}}}.
  \href{http://www.ams.org/mathscinet-getitem?mr=1610591}{MR~1610591
  (99e:20037)}

\bibitem{KapLus10}
Ilya Kapovich and Martin Lustig, \emph{Intersection form, laminations and
  currents on free groups}, Geom. Funct. Anal. \textbf{19} (2010), no.~5,
  1426--1467,
  \href{http://dx.doi.org/10.1007/s00039-009-0041-3}{{\texttt{\textsc{doi}:\detokenize{10.1007/s00039-009-0041-3}}}}.
  \href{http://www.ams.org/mathscinet-getitem?mr=2585579}{MR~2585579
  (2011g:20052)}

\bibitem{MR2221020}
Ilya Kapovich, Paul Schupp, and Vladimir Shpilrain, \emph{Generic properties of
  {W}hitehead's algorithm and isomorphism rigidity of random one-relator
  groups}, Pacific J. Math. \textbf{223} (2006), no.~1, 113--140,
  \href{http://dx.doi.org/10.2140/pjm.2006.223.113}{{\texttt{\textsc{doi}:\detokenize{10.2140/pjm.2006.223.113}}}}.
  \href{http://www.ams.org/mathscinet-getitem?mr=2221020}{MR~2221020
  (2007e:20068)}

\bibitem{LinMar95}
Douglas Lind and Brian Marcus, \emph{An introduction to symbolic dynamics and
  coding}, Cambridge University Press, Cambridge, 1995,
  \href{http://dx.doi.org/10.1017/CBO9780511626302}{{\texttt{\textsc{doi}:\detokenize{10.1017/CBO9780511626302}}}}.
  \href{http://www.ams.org/mathscinet-getitem?mr=1369092}{MR~1369092
  (97a:58050)}

\bibitem{Mac12}
John~M. Mackay, \emph{Conformal dimension and random groups}, Geom. Funct.
  Anal. \textbf{22} (2012), no.~1, 213--239,
  \href{http://dx.doi.org/10.1007/s00039-012-0153-z}{{\texttt{\textsc{doi}:\detokenize{10.1007/s00039-012-0153-z}}}}.
  \href{http://www.ams.org/mathscinet-getitem?mr=2899687}{MR~2899687}

\bibitem{Man10}
Jason~Fox Manning, \emph{Virtually geometric words and {W}hitehead's
  algorithm}, Math. Res. Lett. \textbf{17} (2010), no.~5, 917--925,
  \href{http://dx.doi.org/10.4310/MRL.2010.v17.n5.a9}{{\texttt{\textsc{doi}:\detokenize{10.4310/MRL.2010.v17.n5.a9}}}}.
  \href{http://www.ams.org/mathscinet-getitem?mr=2727618}{MR~2727618
  (2012c:20114)}

\bibitem{Ota92}
Jean-Pierre Otal, \emph{Certaines relations d'equivalence sur l'ensemble des
  bouts d'un groupe libre}, J. London Math. Soc. \textbf{s2-46} (1992), no.~1,
  123--139,
  \href{http://dx.doi.org/10.1112/jlms/s2-46.1.123}{{\texttt{\textsc{doi}:\detokenize{10.1112/jlms/s2-46.1.123}}}}.
  \href{http://www.ams.org/mathscinet-getitem?mr=1180888}{MR~1180888
  (93h:20041)}

\bibitem{Pap57}
C.~D. Papakyriakopoulos, \emph{On {D}ehn's lemma and the asphericity of knots},
  Ann. of Math. (2) \textbf{66} (1957), 1--26.
  \href{http://www.ams.org/mathscinet-getitem?mr=0090053}{MR~0090053 (19,761a)}

\bibitem{ipython}
Fernando P\'erez and Brian~E. Granger, \emph{{IP}ython: a system for
  interactive scientific computing}, Computing in Science and Engineering
  \textbf{9} (2007), no.~3, 21--29,
  \href{http://dx.doi.org/10.1109/MCSE.2007.53}{{\texttt{\textsc{doi}:\detokenize{10.1109/MCSE.2007.53}}}}.

\bibitem{Sco78}
Peter Scott, \emph{Subgroups of surface groups are almost geometric}, J. London
  Math. Soc. (2) \textbf{17} (1978), no.~3, 555--565.
  \href{http://www.ams.org/mathscinet-getitem?mr=0494062}{MR~0494062 (58
  \#12996)}

\bibitem{Sol12}
Brent~B. Solie, \emph{Genericity of filling elements}, Internat. J. Algebra
  Comput. \textbf{22} (2012), no.~2, 1250008, 10,
  \href{http://dx.doi.org/10.1142/S0218196711006741}{{\texttt{\textsc{doi}:\detokenize{10.1142/S0218196711006741}}}}.
  \href{http://www.ams.org/mathscinet-getitem?mr=2903743}{MR~2903743}

\bibitem{Sta99}
John~R. Stallings, \emph{Whitehead graphs on handlebodies}, Geometric group
  theory down under ({C}anberra, 1996), de Gruyter, Berlin, 1999, pp.~317--330.
  \href{http://www.ams.org/mathscinet-getitem?mr=1714852}{MR~1714852
  (2001i:57028)}

\bibitem{Sto97}
Richard Stong, \emph{Diskbusting elements of the free group}, Math. Res. Lett.
  \textbf{4} (1997), no.~2, 201--210.
  \href{http://www.ams.org/mathscinet-getitem?mr=1453054}{MR~1453054
  (98h:20049)}

\bibitem{Whi36}
J.~H.~C. Whitehead, \emph{On equivalent sets of elements in a free group}, Ann.
  of Math. (2) \textbf{37} (1936), no.~4, 782--800,
  \href{http://www.jstor.org/stable/1968618}{{\texttt{\detokenize{http://www.jstor.org/stable/1968618}}}}.
  \href{http://www.ams.org/mathscinet-getitem?mr=1503309}{MR~1503309}

\end{thebibliography}

\end{document}